\newcommand{\be}{\begin{eqnarray}}     	\newcommand{\ee}{\end{eqnarray}}
\newcommand{\vol}{\mathrm{Vol}}
\newcommand{\dist}{\mathrm{dist}}
\newcommand{\rem}{\mathrm{Rm}}
\newcommand{\ric}{\mathrm{Ric}}
\newcommand{\diam}{\mathrm{diam}}
\title{Compactness results for the K\"ahler-Ricci flow}
\author{Natasa Sesum}
\date{} 
\theoremstyle{plain} 
\newtheorem{dummy}{Dummy}
\theoremstyle{definition}
\newtheorem{definition}[dummy]{Definition}
\theoremstyle{plain} \newtheorem{corollary}[dummy]{Corollary}
 \newtheorem{lemma}[dummy]{Lemma}
\newtheorem{theorem}[dummy]{Theorem}
\newtheorem{proposition}[dummy]{Proposition}
\newtheorem{claim}[dummy]{Claim} 
\begin{document}

\maketitle

\begin{abstract}
We consider the K\"ahler-Ricci flow $\frac{\partial}{\partial t}g_{i\bar{j}} =
g_{i\bar{j}} - R_{i\bar{j}}$ on a compact K\"ahler manifold $M$ with 
$c_1(M) > 0$, of complex dimension $k$. 
 We prove the $\epsilon$-regularity lemma for the K\"ahler-Ricci
flow, based on Moser's iteration. Assume that the Ricci curvature 
and $\int_M |\rem|^k\, dV_t$ are uniformly bounded along 
the flow. Using the $\epsilon$-regularity
lemma we derive the compactness result for the K\"ahler-Ricci flow.
Under our assumptions, if $k \ge 3$ in addition, using the compactness result
we show that  $|\rem| \le C$ holds
uniformly along the flow. This means the flow does not
develop any singularities at infinity. We use some  ideas of Tian from \cite{Ti}
to prove the smoothing property in that case.  
\end{abstract}

\section{Introduction}

There has been a lot of interest in compactness theorems for Riemannian 
manifolds under different geometric assumptions (see e.g. \cite{anderson1989},
\cite{bando1989}, \cite{tian1990}, \cite{TV}, \cite{natasa2005}). For example, in the
first three references the authors showed independently that if $(M_i,g_i)$
is a sequence of Einstein manifolds of real dimension $n$ ($n = 2k$), such that
\begin{enumerate}
\item[(i)]
$\diam(M_i,g_i) \le C$, 
\item[(ii)]
$\vol_{g_i}(M_i) \ge \delta$ and 
\item[(iii)]
$\int_{M_i}|\rem|^k\,dV_{g_i} \le C$, 
\end{enumerate}
for uniform constants $C, \delta$, then there is a subsequence of $(M_i,g_i)$
converging to a K\"ahler Einstein orbifold $(M_{\infty}, g_{\infty})$, with finitely many isolated
singularities. Let us denote by $\mathcal{K}_+(C,\delta,n)$ and $\mathcal{K}_-(C,\delta,n)$
the sets of all K\"ahler Einstein manifolds $(M,g)$ satisfying conditions (i),
(ii), (iii), with $\ric(g) = w_g$ and $\ric(g) = -w_g$, respectively. In \cite{Ti} it
was proved that both, $\mathcal{K}_+$ and $\mathcal{K}_-$ are compact
for $n \ge 3$, that is, $(M_{\infty}, g_{\infty})$ is a smooth K\"ahler manifold and 
the convergence is smooth everywhere.  The main tool in this smoothing property
is Kohn's estimate for $\bar{\partial}$-operators on strictly pseudoconvex  CR-manifolds.
 
Especially after the breakthrough Perelman (\cite{perelman2002}) made in using the Ricci
flow to complete Hamilton's program in proving the geometrization conjecture, there has been
an increasing interest in studying the K\"ahler Ricci flow as well. Let $(M,g)$ be a K\"ahler
manifold with $c_1(M) > 0$. Then we can assume $c_1(M) = [\ric(g)]$ is given by the K\"ahler
form $w_g$ and the K\"ahler Ricci flow is a solution to
\begin{equation}
\label{equation-kr}
\begin{split}
\frac{\partial}{\partial t} g_{i\bar{j}} &= g_{i\bar{j}} - R_{i\bar{j}} = \partial_i\bar{\partial}_j u, \\
g_{i\bar{j}}(0) &= g_{i\bar{j}}.
\end{split}
\end{equation}
Notice that the stationary solution to (\ref{equation-kr}) is a K\"ahler Einstein metric 
in $\mathcal{K}_+$. The generalization of  K\"ahler Einstein metrics are  K\"ahler Ricci
solitons.

\begin{definition}
We will say that a solution $g(t)$ to (\ref{equation-kr}) is a K\"ahler Ricci soliton if it moves
by one parameter family of automorphisms $\phi(t)$, or equivalently satisfies the equation
$$R_{i\bar{j}} + \rho g_{i\bar{j}} = \mathcal{L}_X(g),$$
where $X$ is a holomorphic vector field induced by automorphisms $\phi(t)$.
We say $g(t)$ is a gradient K\"ahler Ricci soliton if $X = \nabla f$ for a smooth function
$f$ that satisfies $f_{ij} = 0$. 
\end{definition}  
 
In \cite{natasa2005} we showed the analogous compactness result for the set of gradient
K\"ahler Ricci solitons satisfying similar geometric conditions to those in (i), (ii) and (iii). 
The next interesting problem arises in understanding the parabolic version of
compactness results, that is, the compactness results regarding the solutions to
(\ref{equation-kr}). This is tightly related to questions of converging K\"ahler Ricci
flows and understanding the possible limits.  

In \cite{cao1985} Cao proved that a solution to (\ref{equation-kr}) exists forever, that is,
a singularity does not occur at finite time.  It is very likely to happen that  
the flow develops singularities at infinity. A lot of progress has been made in studying the
limits of the K\"ahler Ricci flow. It is especially important to know
when we can expect to get K\"ahler Einstein metrics in a limit.  Among
first works on that topic is the work by Chen and Tian
(\cite{chen2002}, \cite{chen2001}) where they proved that if $M$
admits a K\"ahler Einstein metric with positive scalar curvature and
if an initial metric has a nonnegative bisectional curvature that is
positive at least at one point, then the K\"ahler Ricci flow converges
exponentially fast to the K\"ahler Einstein metric with constant
bisectional curvature. After the work of Perelman (\cite{perelman2002})
appeared,
in \cite{CCZ} it was proved 
that if the bisectional curvature $R_{i\bar{i}j\bar{j}}$ is positive at time $t=0$ then
the curvature operator stays uniformly bounded along the flow. 
In \cite{TiZh} it has been proved that if (in any complex dimension)
$M$ is a compact K\"ahler manifold which admits a K\"ahler-Ricci soliton $(g_{RS},X)$ defined
by a vector field $X$, then any solution $g(\cdot,t)$ of (\ref{equation-kr}) converges to $g_{RS}$,
if the initial K\"ahler metric $g_0$ is invariant under $K_X$, an $1$-parameter
subgroup of a group of automorphisms of $M$ generated by $X$.
In the recent preprint \cite{PSSW},
the authors have proved the convergence of the K\"ahler-Ricci flow to a
K\"ahler-Einstein metrics under certain assumptions.
By the result in
\cite{natasa2004}, if the curvature is uniformly bounded along the flow then it
sequentially converges to K\"ahler-Ricci solitons. 
In \cite{natasa_KR} we have proved  that if $|\ric(g(t))| \le C$ along the flow, for a uniform constant
$C$, then for any sequence $t_i\to \infty$ there is a subsequence so that
$(M,g(t_i+t)) \to (M_{\infty},g_{\infty}(t))$, where $M_{\infty}$ is
smooth outside a closed singular set $S$ which is of codimension at least
$4$ and the convergence is smooth outside $S$. 

Due to Perelman (see also \cite{se_ti}) we have the following uniform
estimates for the K\"ahler Ricci flow: there are uniform constants $C$
and $\kappa$ such that for all $t$,
\begin{enumerate}
\item
$|u(t)|_{C^1} \le C$,
\item
$\diam(M, g(t)) \le C$,
\item
$|R(g(t))| \le C$,
\item
$(M,g(t))$ is $\kappa$-noncollapsed.
\end{enumerate}
This together with the uniform lower bound on Ricci curvatures
along the flow gives a uniform upper bound on the Sobolev constant, that is, there is a
uniform constant $C_S$ so that for any $v\in C_0^1(M)$ we have that
\begin{equation}
\label{equation-Sobolev}
(\int_M v^{\frac{4n}{2n-2}}dV_{g(t)})^{\frac{2n}{2n-2}} \le C_S
\int_M|\nabla v|^2 dV_{g(t)},
\end{equation} 
for all times $t \ge 0$. This enables us to work with integral estimates.
By the recent results in \cite{Ye} and \cite{Z}, that have been obtained independently,
we have that due to Perelman's  uniform upper bound on the scalar curvature,
(\ref{equation-Sobolev}) holds uniformly along the flow without any geometric
assumptions on the curvatures.

In this paper we first show the following compactness result for the 
K\"ahler-Ricci flow.

\begin{theorem}
\label{theorem-convergence}
Let $g(t)$ be the K\"ahler Ricci flow on a compact, K\"ahler manifold
$M$, with $c_1(M) > 0$, with Ricci curvatures uniformly bounded and
with $\int_M|\rem(g(t))|^{n/2} dV_{g(t)} \le C$ along the flow. Then
for every sequence $t_i\to\infty$ there is a subsequence so that
$(M,g(t_i+t))$ converges to $(M_{\infty},g_{\infty}(t))$, where
\begin{enumerate}
\item[(a)]
$M_{\infty}$ is an orbifold with finitely many isolated singular
points, $\{p_1,\dots,p_N\}$, and the convergence is smooth outside
those singular points.
\item[(b)]
The limit metric $g_{\infty}$ is a K\"ahler Ricci soliton in an
orbifold sense, that is, satisfies the K\"ahler Ricci soliton equation,
\begin{eqnarray}
\label{equation-kahler}
(g_{\infty})_{i\bar{j}} - R_{i\bar{j}}(g_{\infty}) &=&
\partial_i\partial_{\bar{j}}f_{\infty}, \\
\partial_{\bar{i}}\partial_{\bar{j}}f_{\infty} &=& 0 \nonumber,
\end{eqnarray}
off the singular points. Moreover, for every singular point $p_j$,
there is a neighbourhood in $M_{\infty}$ which lifts to an open set
$D_j \subset  \mathrm{C}^{n/2}$, and the lifting of an orbifold
metric $g_{\infty}$ satisfies equivalent equations to (\ref{equation-kahler})
in $D_j$.
\end{enumerate}
\end{theorem}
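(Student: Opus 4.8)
The plan is to fix a sequence $t_i\to\infty$, translate the flow to $g_i(t):=g(t_i+t)$, extract a Gromov--Hausdorff limit, locate the finite set of points where curvature concentrates, use the $\epsilon$-regularity lemma to get smooth convergence away from that set, analyse the tangent cones at the bad points to obtain the orbifold structure, and finally pass the soliton structure to the limit. Each $g_i$ solves \rf{equation-kr}, and by Perelman's estimates together with the hypotheses the bounds $|R(g_i(t))|\le C$, $|\ric(g_i(t))|\le C$, $\diam(M,g_i(t))\le C$, the Sobolev inequality \rf{equation-Sobolev}, $\kappa$-noncollapsing, and $\int_M|\rem(g_i(t))|^{n/2}\,dV_{g_i(t)}\le C$ all hold uniformly in $i$ and $t$ (note that bounded Ricci curvature also makes the metrics $g_i(t)$ uniformly equivalent on bounded time intervals). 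Passing to a subsequence, $(M,g_i(0))\to(X,d_\infty)$ in the Gromov--Hausdorff sense, with $(X,d_\infty)$ compact of diameter $\le C$. I would define the singular set $S\subset X$ to be the set of $p$ such that
$$\liminf_{r\to 0}\ \limsup_{i\to\infty}\ \int_{B_r(x_i,g_i(0))}|\rem(g_i(0))|^{n/2}\,dV_{g_i(0)}\ \ge\ \epsilon_0$$
for every $x_i\to p$, where $\epsilon_0$ is the threshold of the $\epsilon$-regularity lemma; a covering argument using $\int_M|\rem|^{n/2}\le C$ bounds $\#S\le N:=C/\epsilon_0$, so $S=\{p_1,\dots,p_N\}$ is finite.

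Away from $S$ one gets smooth convergence. If $p\in X\setminus S$ there are $r>0$ and $i_0$ with $\int_{B_r(x_i,g_i(0))}|\rem(g_i(0))|^{n/2}\,dV<\epsilon_0$ for $i\ge i_0$ (with $x_i\to p$); by the uniform equivalence of metrics this gives smallness of the space-time integral on a parabolic neighbourhood, so the $\epsilon$-regularity lemma yields $|\rem(g_i)|\le C(r)$ there, and Bando--Shi interior estimates upgrade this to bounds on all $|\nabla^m\rem(g_i(t))|$ on a slightly smaller parabolic neighbourhood. Combined with the injectivity-radius lower bound coming from $\kappa$-noncollapsing, Hamilton's compactness theorem for the Ricci flow applies and, after a further subsequence, the $g_i(t)$ converge in $C^\infty_{\mathrm{loc}}$ on $M$ minus the bad centres to a solution $(M_\infty,g_\infty(t))$ of \rf{equation-kr} on a smooth manifold, with $M_\infty\cong X\setminus S$.

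The main work, and the step I expect to be the chief obstacle, is the structure of $(X,d_\infty)$ near each $p_j$; here I would follow the Einstein and gradient-soliton cases (\cite{anderson1989}, \cite{bando1989}, \cite{Ti}, \cite{natasa2005}, \cite{natasa_KR}). Using $|\ric|\le C$, $\kappa$-noncollapsing and Bishop--Gromov comparison, every tangent cone of $(X,d_\infty)$ at $p_j$ is a metric cone $C(Y_j)$ with Euclidean volume growth; the bounded Ricci curvature combined with the $\epsilon$-regularity lemma forces $Y_j$ to be a smooth spherical space form $S^{n-1}/\Gamma_j$, and since the approximating metrics are K\"ahler the convergence respects the complex structures, so $\Gamma_j\subset U(n/2)$ and the tangent cone is $\mathrm{C}^{n/2}/\Gamma_j$. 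A removable-singularity argument then identifies a punctured neighbourhood of $p_j$ in $X\setminus S$ with $(B\setminus\{0\})/\Gamma_j$ over which $g_\infty$ lifts to a smooth K\"ahler metric on $B\setminus\{0\}\subset\mathrm{C}^{n/2}$, and the $p_j$ cannot accumulate because each carries at least $\epsilon_0$ of the finite total curvature; adjoining $S$ back, $(X,d_\infty)=(M_\infty,g_\infty)$ is a K\"ahler orbifold with isolated singularities $\{p_1,\dots,p_N\}$, which is (a).

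Finally I would identify the limit equation. The modified Perelman $\mu$-functional is monotone nondecreasing along \rf{equation-kr} and bounded above, hence convergent, so
$$\int_{t_i}^{t_i+1}\!\int_M\bigl|R_{i\bar j}+\nabla_i\nabla_{\bar j}f-g_{i\bar j}\bigr|^2e^{-f}\,dV_{g(t)}\,dt\ \longrightarrow\ 0,$$
where $f(\cdot,t)$ realizes $\mu$ at time $t$. Perelman-type estimates give uniform local control of $f(\cdot,t_i+t)$ away from $S$, so passing to the $C^\infty_{\mathrm{loc}}$ limit (as in \cite{natasa2004}) shows that $g_\infty(t)$ satisfies $(g_\infty)_{i\bar j}-R_{i\bar j}(g_\infty)=\partial_i\partial_{\bar j}f_\infty$ off the $p_j$, while $\partial_{\bar i}\partial_{\bar j}f_\infty=0$ because the flow stays K\"ahler and the limiting soliton field $\nabla f_\infty$ is holomorphic; this is \rf{equation-kahler}. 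Pulling $g_\infty$ and $f_\infty$ back through the orbifold charts $D_j\subset\mathrm{C}^{n/2}$ of the previous step yields solutions of the same equations on $D_j\setminus\{0\}$, and elliptic regularity for the soliton equation together with the controlled curvature gives the smooth extension across the origin, proving (b).
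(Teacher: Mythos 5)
Your overall architecture -- translate the flow, extract a Gromov--Hausdorff limit, isolate a finite concentration set via a covering argument, run $\epsilon$-regularity and Shi estimates off that set, analyse tangent cones for the orbifold structure, and use monotonicity of the $\mu$-functional for the soliton equation -- follows the same broad arc as the paper, and most of those steps are faithful to what is actually done. There is, however, one genuine gap in the reasoning, and one structural difference worth flagging.

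The gap is in the passage from single-time smallness to the spacetime smallness that Proposition \ref{lemma-lemma_regularity} actually requires. The hypothesis of the $\epsilon$-regularity lemma is $\sup_{s\in[0,\delta]}\int_{B_t(p,3r)}|\rem|^{n/2}\,dV_{t+s}<\epsilon$, i.e.\ an estimate uniform over a parabolic time slab. You obtain $\int_{B_r(x_i,g_i(0))}|\rem(g_i(0))|^{n/2}\,dV<\epsilon_0$ at the single time $s=0$ and then assert that ``the uniform equivalence of metrics gives smallness of the space-time integral.'' This does not follow: a uniform Ricci bound makes $g_i(s)$ and $g_i(0)$ comparable as metrics, hence comparable volume elements, but it gives no pointwise control of $|\rem(g_i(s))|$ in terms of $|\rem(g_i(0))|$, so $\int_{B}|\rem(g_i(s))|^{n/2}\,dV_{g_i(s)}$ need not be small just because the $s=0$ integral is. The paper confronts exactly this difficulty, in two ways. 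First, inside the finiteness argument (Proposition \ref{prop-finite}) it performs a point selection: it picks points $q_j^i$ where $|\rem|$ achieves its maximum over a parabolic neighbourhood, and proves (Claim \ref{claim-pi}, via a rescaling/compactness contradiction) that at such points the curvature $L^{n/2}$-mass at nearby times is comparable to the mass at the central time. Second, for the quantitative estimates away from the singular set, the paper does not try to build the limit from scratch: it starts from the result of \cite{natasa_KR}, which already gives convergence smooth off a closed singular set of codimension at least four, and therefore already supplies the time-uniform local $L^{n/2}$-smallness needed to feed into the $\epsilon$-regularity lemma. Without either of these devices your argument does not actually produce the spacetime input to Proposition \ref{lemma-lemma_regularity}, and the smooth convergence on $X\setminus S$ is not established.

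The structural difference is precisely this reliance on \cite{natasa_KR}: the paper begins from a limit with a codimension-four singular set and \emph{refines} it to finitely many points, whereas you attempt to construct the limit and its regular set directly from Gromov--Hausdorff compactness plus $\epsilon$-regularity. Your route is cleaner in spirit and in the elliptic (Einstein/soliton) setting it is essentially what \cite{anderson1989}, \cite{bando1989}, \cite{natasa2005} do; but in the parabolic setting the spacetime-vs-single-time issue above is a real obstruction, and the paper's two-stage strategy (first \cite{natasa_KR}, then Proposition \ref{prop-finite} with Claims \ref{claim-pi}--\ref{claim-bad-set}) is what makes it work. The remaining parts of your sketch -- the covering bound $\#S\le C/\epsilon_0$, the tangent-cone and removable-singularity analysis for the orbifold structure, and the $\mu$-functional argument identifying $g_\infty$ as a K\"ahler--Ricci soliton -- are in line with the paper (which defers those steps to \cite{natasa2005}, \cite{anderson1989} and \cite{se_ti}, \cite{natasa_KR} respectively).
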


For a sequence $(M,g(t_i+t))$ converging to $(M_{\infty},g_{\infty})$
in the sense described by the previous theorem in (a) and (b),
we will say it {\em converges to a K\"ahler Ricci soliton in an orbifold sense}.
We will use Theorem \ref{theorem-convergence} to prove the 
following smoothing theorem
that shows the flow does not develop any singularities at infinity in the case
$k  > 3$.

\begin{theorem}
\label{thm-curv-bound}
Let $g(t)$ be a K\"ahler Ricci flow on a compact, K\"ahler manifold
$M$ of complex dimension $k$ ($n = 2k$), with $c_1(M) > 0$, Ricci curvatures 
uniformly bounded and
with $\int_M|\rem(g(t))|^{n/2} dV_{g(t)} \le C$ along the flow. Then if
$k\ge 3$, the curvature 
operator is uniformly bounded along the flow.  
\end{theorem}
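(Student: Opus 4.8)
The plan is to argue by contradiction using the compactness result of Theorem \ref{theorem-convergence} together with a point-picking/rescaling argument, exactly in the spirit of singularity analysis for the Ricci flow. Suppose the curvature operator is \emph{not} uniformly bounded along the flow. Then there is a sequence of times $t_i\to\infty$ and points $x_i\in M$ with $Q_i := |\rem|(x_i,t_i) = \max_{M\times[0,t_i]} |\rem| \to \infty$. Rescale the flow by setting $g_i(t) := Q_i\, g(t_i + t/Q_i)$, based at $x_i$. Since the Ricci curvature of $g(t)$ is uniformly bounded, after rescaling $|\ric(g_i)| \le C/Q_i \to 0$, so in the limit the Ricci curvature vanishes; moreover $|\rem(g_i)| \le 1$ with equality $1$ attained at $(x_i,0)$. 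The scale-invariant integral $\int |\rem(g_i)|^{n/2} dV_{g_i} = \int |\rem(g(t_i+\cdot))|^{n/2}dV \le C$ is preserved, and by Perelman's $\kappa$-noncollapsing the rescaled metrics are $\kappa$-noncollapsed at scale $1$.

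Next I would extract a limit. Here one must be careful: the hypothesis of Theorem \ref{theorem-convergence} is stated for the unrescaled flow, so I would instead invoke the $\epsilon$-regularity lemma (the Moser-iteration estimate advertised in the abstract) directly. The scale-invariant smallness of $\int_{B}|\rem|^{n/2}$ on small balls, combined with the $\epsilon$-regularity lemma, gives local pointwise curvature bounds on $g_i$ away from a finite set of points where curvature concentrates — this is precisely the mechanism producing the orbifold points in Theorem \ref{theorem-convergence}. Passing to a subsequence, $(M,g_i(t))$ converges (in the orbifold Cheeger--Gromov sense, smoothly away from finitely many points) to a complete limit $(M_\infty, g_\infty(t))$ which is Ricci-flat (being a limit of metrics with Ricci curvature tending to $0$), noncollapsed, with $\int_{M_\infty}|\rem(g_\infty)|^{n/2} \le C$, and with $|\rem(g_\infty)|(x_\infty,0) = 1$ so the limit is nonflat. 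Because the limit is a static Ricci-flat Kähler metric, the flow is constant in $t$, so $M_\infty$ with $g_\infty$ is a Ricci-flat Kähler orbifold with finite energy, i.e. an asymptotically locally Euclidean (ALE) space: the finiteness of $\int|\rem|^{n/2}$ together with noncollapsing and the maximal volume growth forces the ALE structure (this is where dimension $k\ge 3$, i.e. $n\ge 6$, enters, via the volume-growth/Sobolev argument and the structure of tangent cones at infinity and at the orbifold points).

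The final step is to derive a contradiction by ruling out such a nontrivial limit, and this is where the ideas of Tian from \cite{Ti} come in — the ``smoothing property.'' A Ricci-flat Kähler ALE orbifold of complex dimension $k\ge 3$ with an isolated singularity modeled on $\mathrm{C}^k/\Gamma$ must actually be smooth: one uses Kohn's $\bar\partial$-estimates on the link, which is a strictly pseudoconvex CR manifold, to show the singularity can be resolved/filled, and the only smooth Ricci-flat ALE Kähler manifold with one end and finite energy is flat $\mathrm{C}^k$, contradicting $|\rem(g_\infty)| \ne 0$. Equivalently, in the Kähler--Einstein orbifold compactness of \cite{Ti} it is shown that for $k\ge 3$ no singularities or curvature concentration can occur at all, and the same obstruction applies to our static Ricci-flat limit. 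Chaining this back: if curvature blows up, we produce a forbidden nontrivial Ricci-flat ALE Kähler orbifold, a contradiction; hence $|\rem(g(t))| \le C$ uniformly.

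I expect the main obstacle to be the extraction and identification of the blow-up limit as a genuine ALE Ricci-flat Kähler \emph{orbifold} with isolated singularities — in particular, verifying that curvature concentrates only at finitely many points (not along positive-dimensional sets) and that the ends are ALE rather than merely of maximal volume growth. This requires the $\epsilon$-regularity lemma to control curvature quantitatively on annuli of all scales, a removable-singularity argument at the concentration points, and a careful use of the uniform Sobolev constant \eqref{equation-Sobolev} and $\kappa$-noncollapsing in the rescaled picture. Once the limit is known to be an ALE Ricci-flat Kähler orbifold, the contradiction via Tian's smoothing argument (and the nonexistence of nonflat such spaces for $k\ge 3$ with one end) is the comparatively clean part.
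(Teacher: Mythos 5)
Your proposal takes a genuinely different route from the paper, and the route has a fatal gap at the very end. You rescale at the curvature scale, extract a nonflat Ricci-flat ALE K\"ahler (orbi)fold bubble, and then try to rule it out by claiming that ``the only smooth Ricci-flat ALE K\"ahler manifold with one end and finite energy is flat $\mathbb{C}^k$.'' This claim is simply false in every complex dimension $k\ge 2$: Calabi's construction produces nonflat Ricci-flat ALE K\"ahler metrics on the total space of $\mathcal{O}_{\mathbb{CP}^{k-1}}(-k)$ asymptotic to $\mathbb{C}^k/\mathbb{Z}_k$ (and in $k=2$ there is Eguchi--Hanson and the whole Kronheimer family). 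Neither does $k\ge 3$ prevent such bubbles; the dimension restriction in \cite{Ti} has nothing to do with ALE geometry, volume growth, or tangent cones. It enters through the vanishing of the Kohn--Rossi cohomology $H^{0,1}(\mathcal{B}(S^{n-1}))=0$ for $n\ge 6$ (Yau \cite{Y}), which yields the uniform Kohn estimate $C\|u\|_2^2 \le \|\bar\partial_b u\|_2^2 + \|\bar\partial_b^* u\|_2^2$ for $\mathcal{B}^{0,1}$ on the strictly pseudoconvex $(2k-1)$-dimensional boundary of a small ball around a would-be singular point. So the contradiction you hope to reach at the end of the blow-up analysis is not there, and the whole argument collapses at that step.

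The paper's actual argument avoids rescaling entirely and has two ingredients you do not use. First, it applies the sections/$\bar\partial_b$ machinery of Sections \ref{section-tian} and \ref{section-smoothing} \emph{to the unrescaled limit $(M_\infty,g_\infty(t))$ from Theorem \ref{theorem-convergence}}, not to a bubble: after embedding $M$ and $M_\infty$ into $\mathbb{CP}^{N_m}$ by orthonormal bases of $H^0(M,K_M^{-m})$ (Lemma \ref{lem-section-1}), Kohn's estimate on the universal covers of the level surfaces $S_{ir}$ controls the convergence of CR structures, and Proposition \ref{prop-smooth-limit} concludes that each $B_r(p_{l\infty},g_\infty(t))$ is a \emph{smooth} ball in $\mathbb{C}^k$, so the orbifold points are not really there. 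Second---and this is the step your proposal has no analogue of---knowing the limit is smooth does not by itself bound $|\rem|(\cdot,t_i)$; the paper closes the gap with Perelman's pseudolocality theorem. Because $B_r(p_{li},g(t_i+t))$ is, for large $i$ and $t\in[-\eta,\eta]$, close to a smooth Euclidean ball, it satisfies the almost-Euclidean isoperimetric hypothesis of pseudolocality, which then yields $|\rem|(x,t)\le \alpha t^{-1}+(\epsilon r)^{-2}$ on parabolic balls; iterating from $t_i-\eta$ up to $t_i$ produces a uniform bound $\sup_M|\rem|(\cdot,t_i)\le C$, contradicting $Q_i\to\infty$. Without pseudolocality (or some elliptic replacement, which is unavailable here since we are not in the Einstein case), smoothness of the limit would not force boundedness of the approximating curvatures; this is precisely the point your proposal leaves unaddressed even if the ALE classification claim were repaired.
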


The proof of the later theorem relies on Kohn's estimate for $\bar{\partial}$-operator 
that is proved in \cite{Ti} and Perelman's pseudolocality theorem (\cite{perelman2002}).
The estimate from \cite{Ti} works only if $n \ge 6$. We expect that in the case of 
surfaces ($n = 4$) the curvature stays uniformly bounded along the flow as well. 
It is known by classification theory of complex surfaces that 
$\mathbb{CP}^2\sharp q\bar{\mathbb{CP}}^2$
($0\le q \le 8$) and $\mathbb{CP}^1\times \mathbb{CP}^1$ are the only compact $4$-manifolds
which admit  complex structures with positive first Chern class.  By the result
of Tian in \cite{tian1990} it is known that each of the surfaces 
$\mathbb{CP}^1\times\mathbb{CP}^1$ and
$\mathbb{CP}^2\sharp q\bar{CP}^2$ admits a K\"ahler Einstein metric
for $q = 0$ and $3 \le q \le 8$. Cao (\cite{C}) and Koiso (\cite{K}) independently showed that
$\mathbb{CP}^2\sharp\bar{\mathbb{CP}}^2$ admits a nontrivial K\"ahler Ricci soliton metric.
Wang and Zhu (\cite{WZ}) showed the same result later for
$\mathbb{CP}^2\sharp2\bar{\mathbb{CP}}^2$.Up to the invariance of the initial metric under $K_X$,
by \cite{TiZh}, we have that for any K\"ahler Ricci flow on a compact K\"ahler
surface $M$ with $c_1(M) > 0$, the curvature operator stays uniformly bounded along 
the flow. Moreover the flow always converges.

The organization of the paper is as follows. In section \ref{section-eps} we will 
show the $\epsilon$-regularity for the K\"ahler-Ricci flow. The proof will be based
on standard Moser's iteration argument. In section \ref{section-compactness}
we will prove the compactness result for the K\"ahler-Ricci flow, Theorem
\ref{theorem-convergence}, using the $\epsilon$-regularity lemma established
in the previous section and some ideas of \cite{anderson1989}, \cite{bando1989}
and \cite{tian1990}. In section \ref{section-tian} we will show that some of
the results from \cite{Ti} regarding the convergence of plurianticanonical 
divisors apply also to our case. In section \ref{section-smoothing}
we will prove   Theorem \ref{thm-curv-bound},
that is, the smoothing property of the compactness result in the case $k \ge 3$, 
which yields the uniform curvature bound along the flow.  
In other words,  we will show we have the smooth convergence in
Theorem \ref{theorem-convergence}. 

{\bf Acknowledgements:} The author would like to thank her advisor G.Tian
for useful discussions. Many thanks to Valentino Tosatti  who brought 
the paper \cite{Ti} to author's attention.

\section{An $\epsilon$-regularity lemma for the K\"ahler-Ricci flow}
\label{section-eps}

In this secton we will establish the $\epsilon$-regularity lemma that
is a parabolic analogue of the $\epsilon$-regularity lemma in the elliptic
setting, that appeared independently  in \cite{anderson1989}, \cite{bando1989}
and \cite{tian1990}. It will allow us obtain pointwise 
curvature bounds away from finitely many curvature concentration points.
More precisely, the obtained quadratic 
curvature decay  will be useful in understanding the orbifold structure
of a limit of the K\"ahler Ricci flow when $t\to\infty$. 
For the proof of the $\epsilon$-regularity lemma we will need Moser's weak
maximum principle argument in a form proved my Dai, Wei and Ye in
\cite{dai1996}.  Along these lines, the $\epsilon$-regularity lemma
for the mean curvature flow has been established in \cite{ecker1995}.

\begin{theorem}[Moser's weak maximum principle]
\label{theorem-theorem_Moser}
Let $f,b$ be smooth nonnegative functions on $N\times [0,T]$ wich
satisfy the following inequality
$$\frac{d}{dt}f \le \Delta f + bf,$$
on $N\times[0,T]$. If $b$ satisfies
$$\sup_{t\in[0,T]}(\int_Nb^{q/2})^{2/q} \le \beta,$$ 
for some $q > n$. Let
$C_S = \max_{[0,T]}C_S(g(t))$ and $l = \max_{[0,t]}||\frac{dg}{dt}||_{C^0}$. 
Then given any $p_0$ there exists a constant $C = C(n,q,p_0,C_S,l,T,R)$
such that for $x\in M\times (0,T]$
$$|f(x,t)| \le Ct^{-\frac{n+2}{2p_0}}(\int_0^T\int_{B_R}f^{p_0})^{1/p_0},$$
where $B_R$ is a geodesic ball of radius $R$, defined in terms of metric $g(0)$.
 \end{theorem}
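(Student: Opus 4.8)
The plan is to carry out the classical Moser iteration for a parabolic differential inequality, using the Sobolev inequality with the given uniform constant $C_S$ in place of the Euclidean Sobolev inequality and absorbing the first-order term coming from the time-dependence of the metric into the constant. First I would fix a cutoff function $\varphi$ in space and a cutoff in time, and test the inequality $\partial_t f \le \Delta f + bf$ against $f^{p-1}\varphi^2$ for an exponent $p \ge 1$. Integrating by parts and using Cauchy--Schwarz to handle the cross term $\int \nabla f \cdot \nabla \varphi \, f^{p-1}\varphi$, one obtains an energy estimate of the shape
\begin{equation*}
\frac{d}{dt}\int_N f^p\varphi^2\,dV_t + c(p)\int_N |\nabla(f^{p/2}\varphi)|^2\,dV_t \le C(p)\int_N f^p\bigl(|\nabla\varphi|^2 + \varphi^2 + b\varphi^2\bigr)\,dV_t,
\end{equation*}
where the extra $\varphi^2$ term on the right absorbs the contribution of $\frac{d}{dt}dV_t$, which is controlled by $l = \max \|\tfrac{dg}{dt}\|_{C^0}$. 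The term $\int f^p b\varphi^2$ is the one that genuinely needs the hypothesis $(\int_N b^{q/2})^{2/q}\le\beta$: by H\"older with exponents $q/2$ and $q/(q-2)$ it is bounded by $\beta\,(\int_N (f^{p/2}\varphi)^{2q/(q-2)})^{(q-2)/q}$, and since $q>n$ the exponent $2q/(q-2)$ is strictly less than the Sobolev exponent $\tfrac{2n}{n-2}$, so by interpolation (another H\"older) and Young's inequality this is dominated by $\epsilon\int |\nabla(f^{p/2}\varphi)|^2 + C(\epsilon,\beta,q,n)\int f^p\varphi^2$; choosing $\epsilon$ small lets it be absorbed into the good gradient term on the left.

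Next I would combine the resulting differential inequality for $Y_p(t) = \int f^p\varphi^2\,dV_t$ with the Sobolev inequality \eqref{equation-Sobolev}, which upgrades $\int |\nabla(f^{p/2}\varphi)|^2$ to a bound on $\|f^{p/2}\varphi\|_{L^{2n/(n-2)}}^2$. Together with the $L^2$ bound on $f^{p/2}\varphi$ coming from $Y_p$, an interpolation in space-time (the standard ``parabolic interpolation'') yields control of $\int_0^T\!\int (f^{p/2}\varphi)^{2\chi}$ for $\chi = \tfrac{n+2}{n} > 1$ in terms of $\sup_t Y_p(t) + \int_0^T\!\int |\nabla(f^{p/2}\varphi)|^2$, hence in terms of $\int_0^T\!\int_{\supp\varphi} f^p\cdot(\text{cutoff factors})$. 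This is the engine of the iteration: with $p_m = p_0\chi^m$ and a nested sequence of space-time cutoffs $\varphi_m$ shrinking from $B_R\times[0,T]$ down to a neighborhood of the point $(x,t)$, each step gives
\begin{equation*}
\Bigl(\int\!\!\int_{Q_{m+1}} f^{p_{m+1}}\Bigr)^{1/p_{m+1}} \le \bigl(C\, b_m\bigr)^{1/p_m}\Bigl(\int\!\!\int_{Q_m} f^{p_m}\Bigr)^{1/p_m},
\end{equation*}
where $b_m$ grows at most geometrically in $m$ (from the $|\nabla\varphi_m|^2$ and the reciprocal of the shrinking time-interval). Iterating and using $\sum \chi^{-m} < \infty$ and $\sum m\chi^{-m} < \infty$, the product of constants converges, and one arrives at $\sup_{Q_\infty} f \le C \bigl(\int_0^T\!\int_{B_R} f^{p_0}\bigr)^{1/p_0}$. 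Keeping careful track of the time-cutoff scale, which must be taken comparable to $t$ in the last stages so that $Q_\infty$ reaches time $t$, produces precisely the weight $t^{-(n+2)/(2p_0)}$ in the statement (the exponent $n+2$ being the parabolic dimension and the $1/p_0$ coming from the final normalization).

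The main obstacle, and the place where care is genuinely required rather than routine, is the bookkeeping that makes the first-order term $\|\tfrac{dg}{dt}\|_{C^0}$ and the time-derivative of the volume form harmless: one must check that differentiating $\int f^p\varphi^2\,dV_t$ produces only terms already on the right-hand side, so that $l$ enters the final constant additively and does not spoil the geometric growth of the $b_m$. A secondary subtlety is that \eqref{equation-Sobolev} as written in the excerpt is stated with exponents that look slightly off ($\tfrac{2n}{2n-2}$ and the outer power), so one should use it in the standard normalized form $\|v\|_{2n/(n-2)}^2 \le C_S(\|\nabla v\|_2^2 + \|v\|_2^2)$ valid on a compact manifold uniformly in $t$; the volume lower bound from $\kappa$-noncollapsing guarantees the lower-order term can be included. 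Everything else — the Young's inequality absorptions, the choice $\chi = (n+2)/n$, the summation of the iteration constants — is the standard Dai--Wei--Ye / Ecker--Huisken template, so I would cite \cite{dai1996} for the core iteration lemma and only spell out the modifications due to the evolving metric and the potential term $b$.
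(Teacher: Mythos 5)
The paper does not prove Theorem~\ref{theorem-theorem_Moser}: it explicitly attributes it to Dai, Wei and Ye and cites \cite{dai1996}, then uses it as a black box in the proof of Proposition~\ref{lemma-lemma_regularity}. So there is no ``paper's proof'' to compare against. That said, your sketch is a faithful reconstruction of the standard parabolic Moser iteration that the cited reference carries out, and the key steps are all correctly identified: testing against $f^{p-1}\varphi^2$, the energy estimate with the extra $\varphi^2$ term absorbing the time-derivative of the volume form (controlled by $l = \max\|\frac{dg}{dt}\|_{C^0}$), the H\"older-plus-interpolation treatment of the potential term using $q>n$ so that $\tfrac{2q}{q-2}<\tfrac{2n}{n-2}$, the parabolic Sobolev embedding with exponent $\chi=\tfrac{n+2}{n}$, and the telescoping of geometric constants in the iteration. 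Your remark about the time-cutoff bookkeeping producing the weight $t^{-(n+2)/(2p_0)}$ and your caveat about the exponents in~(\ref{equation-Sobolev}) (which should be read as the standard $\|v\|_{2n/(n-2)}^2\le C_S(\|\nabla v\|_2^2+\|v\|_2^2)$) are both well placed. In short, the proposal is correct in approach and level of detail for a proof the paper itself chose to delegate to the literature.
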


Denote by $f = |\rem|$. By the evolution equation of the curvature
operator we have that $f$ satisfies
\begin{equation}
\label{equation-equation_ev_curv}
\frac{d}{dt}f \le \Delta f + Cf^2,
\end{equation} 
for some uniform constant $C$. Assume the following condition, that we will refer to as to
a {\it euclidean volume growth}. 
\begin{equation}
\label{eq-new-cond}
\vol_t B_t(p,r) \le Cr^n, \,\, \mbox{for all} \,\, t\ge 0 \,\, p\in M \,\, 
\mbox{and} \,\, 0 \le r \le \diam(M,g(t)).
\end{equation}
Assuming the euclidean volume growth condition, we have the following proposition.

\begin{proposition}
\label{lemma-lemma_regularity}
There are $C = C(n,C_S)$, $\epsilon = \epsilon(n,C_S)$ and $0 < \xi_1, \xi_2 < 1$, 
such that if for some $t > 0$ and 
$\delta \le r^2$ (we may assume $\delta$ is comparable with $r^2$),
\begin{equation}
\label{equation-equation_condition}
\sup_{s\in [0,\delta]}\int_{B_{t}(p,3r)}f^{n/2}dV_{t+s} < \epsilon,
\end{equation}
then
\begin{equation}
\label{equation-equation_conclusion}
\sup_{B_t(p,\xi_1 r)\times[\xi_2\delta, \delta]}|f| \le C(\frac{1}{r^2} +
\frac{1}{\delta})\sup_{s\in
[0,\delta]}(\int_{B_{t}(p,2r)}f^{n/2})^{2/n} dV_{t+s},
\end{equation}
where $p$ is any point in $M$ and our estmates are independent of the
initial time $t$.
\end{proposition}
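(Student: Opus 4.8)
The plan is to run a standard Moser iteration on $f = |\rem|$, using its evolution inequality \eqref{equation-equation_ev_curv}, $\partial_t f \le \Delta f + Cf^2$, but with the coefficient $b := Cf$ of the linear term controlled in $L^q$ for a suitable $q > n$. The point of hypothesis \eqref{equation-equation_condition} is exactly that $\int_{B_t(p,3r)} f^{n/2}\,dV_{t+s} < \epsilon$ small, so after restricting to a parabolic cylinder over $B_t(p,3r)$ (and multiplying by an appropriate smooth spatial cutoff supported in $B_t(p,3r)$ and equal to $1$ on $B_t(p,2r)$), the quantity $(\int b^{q/2})^{2/q}$ will be small if we could take $q = n$; since Theorem~\ref{theorem-theorem_Moser} requires $q > n$ strictly, the first step is to upgrade the integral bound slightly. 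Here one uses the local Sobolev inequality (coming from the uniform Sobolev constant $C_S$ and the Euclidean volume growth \eqref{eq-new-cond}) together with a parabolic energy estimate for $f$ on the cylinder: testing \eqref{equation-equation_ev_curv} against $f^{p-1}\varphi^2$ for a cutoff $\varphi$ and absorbing the bad term using the smallness of $\int f^{n/2}$, one obtains a reverse-H\"older / $L^q$ gain, giving $\sup_s(\int_{B_t(p,5r/2)} f^{q/2})^{2/q} \le C(r^{-2}+\delta^{-1})\,\epsilon^{2/n}$ for some $q>n$ depending only on $n, C_S$. This is the step I expect to be the main obstacle: making the absorption argument clean, uniformly in $t$, with the correct scaling in $r$ and $\delta$, and with all constants depending only on $n$ and $C_S$ (this is where \eqref{eq-new-cond} and the uniform bounds on $\|dg/dt\|_{C^0}$ from Perelman's estimates get used).

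With $b = Cf$ now bounded in $L^{q/2}$ for some fixed $q>n$ on the cylinder $B_t(p,2r)\times[0,\delta]$, the second step is to apply Theorem~\ref{theorem-theorem_Moser} directly, with $N$ replaced by the ball $B_t(p,2r)$, the time interval $[0,\delta]$, and $p_0 = n/2$. This yields, for $(x,s)$ with $s\in(0,\delta]$,
\begin{equation*}
|f(x,t+s)| \le C\,s^{-\frac{n+2}{n}}\Bigl(\int_0^\delta\!\!\int_{B_t(p,2r)} f^{n/2}\Bigr)^{2/n}.
\end{equation*}
Restricting to $s \ge \xi_2\delta$ bounds $s^{-(n+2)/n}$ by $C(\xi_2\delta)^{-(n+2)/n}$, and using $\delta \le r^2$ together with $\delta \sim r^2$ one rewrites the prefactor as $C(r^{-2}+\delta^{-1})\cdot\delta^{-2/n}\cdot(\text{stuff})$; finally $\int_0^\delta\int_{B_t(p,2r)} f^{n/2}\,dt \le \delta\,\sup_{s\in[0,\delta]}\int_{B_t(p,2r)} f^{n/2}\,dV_{t+s}$, and the factor $\delta^{2/n}$ from raising to the $2/n$ power cancels the $\delta^{-2/n}$. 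After also shrinking the spatial ball from $B_t(p,2r)$ to $B_t(p,\xi_1 r)$ (absorbed in the cutoff localization from the first step), this produces exactly \eqref{equation-equation_conclusion}.

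Two technical remarks on carrying this out. First, all balls and volumes are measured with respect to $g(t+s)$, but the metrics $g(t+s)$ for $s\in[0,\delta]$ with $\delta\sim r^2$ are uniformly equivalent (since $\|\partial_t g\|_{C^0}\le C$ and the time interval has length $\lesssim r^2 \le \diam^2 \le C$), so one may freely pass between them at the cost of fixed constants; this is also what lets the geodesic ball $B_R$ in Theorem~\ref{theorem-theorem_Moser}, defined via $g(0)$ there, be identified with a $g(t)$-ball here up to controlled distortion, with the independence of the initial time $t$ coming from the fact that every constant in sight (Sobolev constant, curvature evolution constant $C$, metric-distortion constant) is uniform along the flow. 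Second, one should double-check the exponents: the paper's Sobolev inequality \eqref{equation-Sobolev} is written with exponent $\frac{4n}{2n-2}$, i.e. the $L^2 \to L^{2n/(n-1)}$ embedding in real dimension... here one uses instead the standard $L^2\to L^{2n/(n-2)}$ Sobolev inequality in dimension $n$, which is the form actually needed for the iteration and which follows from the same uniform geometric bounds; the critical Lebesgue exponent $n/2$ appearing in \eqref{equation-equation_condition} is precisely the scale-invariant one for this embedding applied to $f^2$, which is why the smallness threshold $\epsilon$ can be taken to depend only on $n$ and $C_S$.
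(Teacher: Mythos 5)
Your plan is essentially the paper's: iterate a local parabolic energy estimate (test \eqref{equation-equation_ev_curv} against $\eta^2 f^{p-1}$, absorb the cubic term via the $L^{n/2}$-smallness and the uniform Sobolev inequality) to upgrade $f$ to $L^q$ for some $q>n$, then invoke Theorem~\ref{theorem-theorem_Moser}; the paper applies it with $p_0=2$ and a final H\"older step rather than your $p_0=n/2$, but both give \eqref{equation-equation_conclusion}. The paper sidesteps the $r,\delta$-bookkeeping you flag as the main obstacle (and which produced a dimensionally inconsistent intermediate bound $\sup_s(\int f^{q/2})^{2/q}\le C(r^{-2}+\delta^{-1})\epsilon^{2/n}$ in your sketch) by rescaling $\bar g(t)=r^{-2}g(r^2 t)$ so that $r=1$ throughout and unscaling at the end.
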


By the independent 
results of Zhang (\cite{Z}) and Ye (\cite{Ye}), there are positive
constants $A$ and $B$, depending only on the geometric quantities of $g(0)$,
so that for all $t \in [0,\infty)$ and all $u\in W^{1,2}(M)$,
$$\left(\int_M|u|^{\frac{2n}{n-2}}\,dV_t\right)^{\frac{n-2}{n}} \le 
A\int_M (|\nabla u|^2 + \frac{R}{4}u^2)\, dV_t + B\int_M u^2\, dV_t.$$
Due to Perelman, $|R| \le C$ along the flow and the above inequality becomes
\begin{equation}
\label{eq-sob-new}
\left(\int_M|u|^{\frac{2n}{n-2}}\,dV_t\right)^{\frac{n-2}{n}} \le 
A\int_M |\nabla u|^2 + B_1\int_M u^2\, dV_t,
\end{equation}
for a different constant $B_1$.

We will first give a proof of Proposition \ref{lemma-lemma_regularity} under the assumption that
$|\ric| \le C$ for all $t \ge 0$. Later we will show how we can replace the Ricci curvature condition 
with the euclidean volume growth condition.

Let $\eta$ be a cut off function, so that
$\eta = 1$ in $B_t(p,r)$ and $\eta = 0$ outside the ball $B_t(p,2r)$ and
$|\nabla\eta|^2(t) \le \frac{C}{r^2}$. Since,
$$\frac{d}{dt}|\nabla\eta|^2 = |\nabla \eta|^2 - \ric(\nabla\eta,\bar{\nabla}\eta),$$
and therefore $|\frac{d}{dt}\ln |\nabla\eta|^2| \le C$, it easily follows that,
\begin{equation}
\label{equation-eta}
\sup_{s\in [t-r^2,t+r^2]}|\nabla\eta|^2(s) \le \frac{\tilde{C}}{r^2},
\end{equation}
for a uniform constant $\tilde{C}$.

\begin{proof}[Proof of Proposition \ref{lemma-lemma_regularity}]
Since the equations (\ref{equation-equation_ev_curv}), (\ref{equation-equation_condition})
and (\ref{equation-equation_conclusion}) are scale invariant, we may
assume the radius of the ball is $r=1$. In other words, by rescaling
our metric by  factor $r^{-2}$, that is $\bar{g}(t) = r^{-2}g(r^2t)$, it is
enough to show that there are uniform constants $C, \xi_1$ and  $\xi_2$ so that
$$\sup_{B_t(p,\xi_1)\times[\xi_2\delta r^{-2}, \delta r^{-2}]}|f|(x,t+s) \le C\sup_{s\in
[0,\delta r^{-2}]}(\int_{B(p,2)}f^{\frac{n}{2}} dV_{t+s})^{2/n},$$ 
for some uniform constant
$C$, if $\sup_{s\in [0,\delta r^{-2}]}\int_{B_t(p,3)} f^{n/2}dV_{t+s} < \epsilon$ 
(where $f(t) = |\rem|(\bar{g}(t))$).
Multiply (\ref{equation-equation_ev_curv}) by $\eta^2f$, where $\eta$
is a cut off function with compact support in $B_t(p,3)$,
which is equal to $1$ on $B_t(p,2)$ and such that
$|\nabla\eta| \le C$ ($|\nabla\eta|$
is computed in norm $\bar{g}(t)$). Furthermore, since $|\ric|(\bar{g}(t)) \le Cr^2$,
for all $t$, we have
$$\frac{d}{ds}|\nabla\eta|^2(t+s) \le Cr^2|\nabla\eta|^2(t+s),$$ 
and $|\nabla\eta|(t+s) \le \tilde{C}$ for $s\in [0,\delta r^{-2}]$.  After multiplying
(\ref{equation-equation_ev_curv}) by $\eta^2f$ and integrating by
parts we get
\begin{equation}
\label{equation-equation_eq1}
\int\eta^2|\nabla f|^2 - 2\int\eta f|\nabla\eta||\nabla f| +
\frac{1}{2}\frac{d}{ds}\int f^2\eta^2 \le C\int\eta^2f^2 + C\int\eta^2f^3.
\end{equation}
By H\"older,  Sobolev and interpolation inequalities we have
\begin{equation}
\label{equation-equation_eq2}
\int\eta f|\nabla f||\nabla\eta| \le \frac{1}{2}\int \eta^2|\nabla f|^2 +
C\int|\nabla\eta|^2f^2,
\end{equation}
\begin{eqnarray}
\label{equation-equation_eq3}
\int\eta^2f^3 &=& \int (\eta f^2)\cdot f \le 
(\int_{B_t(p,3)}f^{n/2})^{2/n}(\int (\eta f)^{\frac{2n}{n-2}})^{\frac{n-2}{n}} \nonumber \\
&\le& \epsilon C_S (\int|\nabla f|^2\eta^2) + \int |\nabla\eta|^2 f^2).
\end{eqnarray}
Combining  (\ref{equation-equation_eq1}), (\ref{equation-equation_eq2})
and (\ref{equation-equation_eq3}) and choosing $\epsilon$ small so
that we can absorb $\epsilon C_S (\int|\nabla f|^2\eta^2)$ into the
left hand side of (\ref{equation-equation_eq1}) yield
\begin{equation}
\label{equation-equation_extension}
\frac{d}{ds}\int\eta^2 f^2 + C\int|\nabla f|^2\eta^2 \le C\int f^2(\eta^2 + |\nabla\eta|^2).
\end{equation}
This implies
\begin{equation}
\label{equation-equation_est_nab0}
\begin{split}
\int_0^{\delta r^{-2}}\int_{B_t(p,2)}|\nabla f|^2 &\le C(\int_0^{\delta r^{-2}}\int_{B_t(p,3)} f^2\, 
dV_{t+s}\, ds + \int_{B_t(p,3)} f^2 dV_t)  \\
&\le \tilde{C},
\end{split}
\end{equation}
for a uniform constant $\tilde{C}$, since 
\begin{equation}
\label{equation-n2}
\int_M f^2 \, dV_t \le (\int_M f^{\frac{n}{2}}\, dV_t)^{4/n}\cdot (\vol_t(M))^{(n-4)/n} \le C,
\end{equation}
for a uniform constant $C$.
We also have the evolution equation for $|\nabla f|^2$,
\begin{equation}
\label{equation-equation_ev_nabla}
\frac{d}{ds}|\nabla f|^2 \le \Delta |\nabla f|^2 - 2|\nabla^2 f|^2 + C|\nabla f|^2f.
\end{equation}
Let $\eta$ be a cut off function with compact support in
$B_t(p,2)$ such that it is identically $1$ on
$B_t(p,1)$. Multiply (\ref{equation-equation_ev_nabla}) by $\eta^2$
and integrate it over $M$.
\begin{equation}
\label{equation-equation_nab1}
\frac{d}{ds}\int\eta^2|\nabla f|^2 \le C\int\eta^2|\nabla f|^2 -
2\int\eta \nabla|\nabla f||\nabla f|\nabla\eta - \int\eta^2|\nabla^2 f|^2 +
C\int\eta^2|\nabla f|^2 f.
\end{equation}
Similarly as before we can estimate
\begin{equation}
\label{equation-equation_nab2}
2\int\eta \nabla|\nabla f|\nabla\eta|\nabla f| \le
\frac{1}{3}\int\eta^2|\nabla|\nabla f||^2 + C\int|\nabla \eta|^2|\nabla f|^2,
\end{equation}
\begin{eqnarray}
\label{equation-equation_nab3}
\int \eta^2|\nabla f|^2f &\le& (\int f^{n/2})^{2/n}(\int (|\nabla f|\eta)^{\frac{2n}{n-2}})^{\frac{n-2}{n}} \nonumber\\
&<& \epsilon C_S(\int|\nabla|\nabla f||^2\eta^2 + \int|\nabla f|^2|\nabla \eta|^2).
\end{eqnarray}
Combining (\ref{equation-equation_nab1}),
(\ref{equation-equation_nab2}) and (\ref{equation-equation_nab3}),
choosing $\epsilon$ small enough and using that $|\nabla|\nabla f||
\le |\nabla^2 f|$ yield
\begin{eqnarray}
\label{equation-equation_psi}
\frac{d}{ds}\int|\nabla f|^2 \eta^2 &\le& C\int |\nabla f|^2(\eta^2 + |\nabla\eta|^2) \nonumber \\
&\le& C\int_{B_t(p,2)}|\nabla f|^2.
\end{eqnarray}
As in \cite{dai1996} choose a cut off function $\psi(s)$ such that
$\psi(s)$ is $0$ for $0\le t \le \delta r^{-2}/4$, 
$\frac{4}{\delta r^{-2}}(t-\delta r^{-2}/4)$ for $t\in [\delta r^{-2}/4,\delta r^{-2}/2]$
and $1$ for $t\in [\delta r^{-2}/2, \delta r^{-2}]$. Multiplying (\ref{equation-equation_psi})
by $\psi(s)$ we obtain
$$\frac{d}{ds}(\psi\int|\nabla f|^2\eta^2) \le (C\psi + \psi')\int |\nabla f|^2\eta^2
+ C\psi\int f^2|\nabla\eta|^2.$$
Integrating  in $s$ we obtain
$$\int\eta^2|\nabla f|^2(t+s) \le C\int_0^{\delta r^{-2}}\int_{B_t(p,2)}|\nabla f|^2dV_{t+s}ds +
C\int_0^{\delta r^{-2}}\int_{B_t(p,3)}f^2 dV_{t+s}ds \le \tilde{C},$$
for a uniform constant $\tilde{C}$, independent of $t$ and 
$s\in [\delta r^{-2}/2,\delta r^{-2}]$. By (\ref{equation-equation_est_nab0}) we get
$$\sup_{[\delta r^{-2}/2,\delta r^{-2}]}\int_{B_t(p,1)}|\nabla f|^2 \le 
C(\int_0^{\delta r^{-2}}\int_{B_t(p,3)} f^2\, dV_{t+s}\, ds + \int_{B_t(p,3)} f^2\, dV_t),$$
for a uniform constant $C$.
By Sobolev inequality we get
\begin{equation}
\label{equation-equation_better}
\begin{split}
\sup_{[\delta r^{-2}/2,\delta r^{-2}]}(\int_{B_t(p,1)}f^{\frac{2n}{n-2}}dV_{t+s})^{(n-2)/n} &\le 
\tilde{C}_1(\int_0^{\delta r^{-2}}\int_{B_t(p,3)} f^2 dV_{t+s}\, ds + \\
& \int_{B_t(p,3)} f^2\, dV_t).
\end{split}
\end{equation}
In other words, the $L^{\frac{2n}{n-2}}$-norm of $|f|$ can be estimated in terms of the
$L^2$-norm of $|f|$. Similarly, by multiplying (\ref{equation-equation_ev_curv}) by
$\eta^2 f^{\frac{n+1}{n-1}}$ and proceeding as above, we have
\begin{eqnarray*}
& &\sup_{s\in [\delta r^{-2}(\frac{1+1/2}{2}),\delta r^{-2}]} (\int_{B_t(p,1/3)} f^{2(n/(n-2))^2}\, 
dV_{t+s})^{(n-2)/n} \le \\
& & C_1(\int_{\frac{\delta r^{-2}}{2}}^{\delta r^{-2}}\int_{B_t(p,1)}  f^{\frac{2n}{n-2}}\, dV_{t+s}\, ds + 
\int_{B_t(p,3)} f^{\frac{2n}{n-2}} dV_{t+\delta r^{-2}}) \le C.
\end{eqnarray*}
Continuing the above process $l$ times, so that $(\frac{n}{n-2})^l \ge n$, using
(\ref{equation-n2}) we obtain
\begin{equation}
\label{equation-equation_better}
\begin{split}
& \sup_{s\in [\delta r^{-2}(\frac{\sum_{j=0}^l 2^{-j}}{2}),\delta r^{-2}]} (\int_{B_t(p,1/3^l)} f^{2(n/(n-2))^l}\, dV_{t+s})^{((n-2)/n)^l} \\
&\le C(l) \sup_{s\in [0,\delta r^{-2}]}\int_M f^2 \, dV_s  
\le \tilde{C}(l) \sup_{s\in [0,\delta r^{-2}]}\left(\int_M f^{\frac{n}{2}} \, dV_s\right)^{\frac{4}{n}}. 
\end{split}
\end{equation}
By Theorem \ref{theorem-theorem_Moser}, its proof and by the estimate
(\ref{equation-equation_better}) we get
\begin{eqnarray*}
\sup_{B_t(p,1/(2\cdot 3^l))\times [ \delta r^{-2}\left(\frac{\sum_{j=0}^l 2^{-j}}{2}\right)
,\delta r^{-2}]} |f|(x,t)
&\le& C(l)\sup_{s\in [0,\delta r^{-2}]}\left(\int_{B_t(p,1)}f^2\right)^{1/2} \\
&\le& \tilde{C}(l)\sup_{s\in [0,\delta r^{-2}]}\left(\int_{B_t(p,1)}f^{\frac{n}{2}}\right)^{\frac{2}{n}}.
\end{eqnarray*}
If we rescale metric $\bar{g}$ back to our original metric $g(t)$ we get
(\ref{equation-equation_conclusion}) by putting $\xi_1 := \frac{1}{2\cdot 3^l}$ and
$\xi_2 := \left(\frac{\sum_{j=0}^l 2^{-j}}{2}\right)$.
\end{proof}

To show how we can remove the hypothesis on the Ricci curvature bound,
note that the main tool in the proof of Proposition \ref{lemma-lemma_regularity} is the
Sobolev inequality (\ref{equation-Sobolev}) that we have due to \cite{Ye} and \cite{Z}.    
We have used the Ricci curvature bound to 
control the norms $\sup_{s\in [-r^2,r^2]}|\nabla\eta|_{t+s}$, where $\eta$ 
is the cut off function  with a support in $B_t(p,r)$, independently of
$t$ and $s \in [-r^2,r^2]$. We can overcome 
that difficulty by choosing the cut off function differently, if we assume the euclidean volume growth condition. The difference is that we will let the cut off function evolve in time as the
metric evolves, unlike in the situation above where the cut off function was 
time independent. 

\begin{corollary}
\label{cor-regularity}
Proposition \ref{lemma-lemma_regularity} still holds if we replace  the condition
on a uniform bound of Ricci curvatures along the flow with a weaker
condition (\ref{eq-new-cond}).
\end{corollary}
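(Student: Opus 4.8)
The plan is to mimic the proof of Proposition \ref{lemma-lemma_regularity} verbatim, with the single modification that the cut-off function $\eta$ is now allowed to depend on time, chosen so that its gradient remains controlled under the euclidean volume growth assumption (\ref{eq-new-cond}) rather than under a Ricci bound. Concretely, for a fixed base time $t$ and base point $p$, I would take $\eta(\cdot,t+s)$ to be (a smoothing of) a function of the distance $\dist_{t+s}(p,\cdot)$, equal to $1$ on $B_{t+s}(p,r)$, supported in $B_{t+s}(p,2r)$, with $|\nabla\eta|_{t+s}\le C/r$ measured in the metric $g(t+s)$. Because the metric evolves by $\partial_s g_{i\bar j}=g_{i\bar j}-R_{i\bar j}$, the distance function changes in a way that under the scalar curvature bound $|R|\le C$ (Perelman) and the noncollapsing/volume growth hypothesis is comparable across the short time interval $s\in[0,\delta r^{-2}]$ (which is of unit length after the rescaling $\bar g(t)=r^{-2}g(r^2t)$), so such a family of cut-offs exists with $|\nabla\eta|_{t+s}\le\tilde C$ uniformly.

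The key point is that once this time-dependent cut-off is in place, every inequality in the proof of Proposition \ref{lemma-lemma_regularity} goes through with at most a harmless extra term. After rescaling so $r=1$, I would multiply (\ref{equation-equation_ev_curv}) by $\eta^2 f$ and integrate; the only new contribution compared to the static case is the term $\int f^2\eta\,\partial_s\eta$ coming from $\frac{d}{ds}\int f^2\eta^2$. Since $|\partial_s\eta|\le C$ (the cut-off is built from a bounded profile applied to a distance function whose time-derivative is controlled by the evolution equation and the scalar curvature bound), this term is bounded by $C\int f^2\eta^2$, which is already of the form appearing on the right-hand side of (\ref{equation-equation_extension}); it is absorbed into the existing $C\int f^2(\eta^2+|\nabla\eta|^2)$. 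The Sobolev inequality (\ref{eq-sob-new}) is used exactly as before, with no curvature assumption needed, and the interpolation bound $\int_M f^2\,dV_t\le C$ from (\ref{equation-n2}) only uses $\int_M f^{n/2}\le C$ and the volume bound $\vol_t(M)\le C$, both of which still hold. The Moser iteration on $|\nabla f|^2$ via (\ref{equation-equation_ev_nabla}) is likewise unchanged, again picking up only an absorbable $\int|\nabla f|^2\eta\,\partial_s\eta$ term. Finally Theorem \ref{theorem-theorem_Moser} is applied as stated, and rescaling back gives (\ref{equation-equation_conclusion}) with the same constants $\xi_1,\xi_2$ and a constant $C$ now depending on $(n,C_S)$ and the euclidean volume growth constant.

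The main obstacle, and the only place requiring genuine care, is constructing the time-dependent cut-off $\eta$ and verifying the uniform bounds $|\nabla\eta|_{t+s}\le\tilde C$ and $|\partial_s\eta|\le C$ on the full rescaled interval $s\in[0,\delta r^{-2}]$. This rests on showing that, under (\ref{eq-new-cond}) together with Perelman's $|R|\le C$ and $\kappa$-noncollapsing, the metrics $g(t+s)$ for $s$ in a bounded interval are uniformly equivalent on the relevant balls, so that a ball $B_{t}(p,r)$ can be sandwiched between $B_{t+s}(p,c_1 r)$ and $B_{t+s}(p,c_2 r)$ for fixed $0<c_1<c_2$, and that one can interpolate a smooth profile across the annulus with bounded gradient and time-derivative. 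I would establish this by a standard distance-distortion estimate: $\bigl|\frac{\partial}{\partial s}\dist_{t+s}(x,y)\bigr|$ is controlled along the flow using the evolution of the metric and the bound on $|R|$ (and, where needed, the lower Ricci-type control implicit in the noncollapsing and volume growth), so distances change by at most a bounded multiplicative factor over an interval of bounded length. Once that distortion estimate is in hand — it is essentially Perelman-type and does not require a two-sided curvature bound — the rest of the argument is a line-by-line repetition of the proof already given, so I would simply remark that (\ref{equation-equation_eq1})--(\ref{equation-equation_better}) hold mutatis mutandis and conclude.

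\begin{proof}[Proof of Corollary \ref{cor-regularity}]
We repeat the proof of Proposition \ref{lemma-lemma_regularity}, using the Sobolev inequality (\ref{eq-sob-new}), which holds with no curvature assumptions, in place of (\ref{equation-Sobolev}). The only place where the Ricci bound entered was in guaranteeing that the (time-independent) cut-off function $\eta$ supported in $B_t(p,2r)$ satisfied $\sup_{s\in[-r^2,r^2]}|\nabla\eta|_{t+s}^2\le\tilde C/r^2$. We now instead work with a time-dependent cut-off. After the rescaling $\bar g(t)=r^{-2}g(r^2t)$ we may take $r=1$ and $s$ ranging over an interval of bounded length $[0,\delta r^{-2}]$.

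By Perelman's estimate $|R|\le C$ and the $\kappa$-noncollapsing property, together with the euclidean volume growth condition (\ref{eq-new-cond}), the metrics $\bar g(t+s)$ for $s$ in a bounded interval are uniformly equivalent on the balls in question, and the standard distance-distortion estimate along the flow gives a uniform constant $c$ with $B_{t+s}(p,1)\subset B_t(p,c)$ and $B_t(p,1)\subset B_{t+s}(p,c)$ for all such $s$. Hence we may choose, for each $s$, a smooth cut-off $\eta(\cdot,t+s)$ equal to $1$ on $B_{t+s}(p,1)$, supported in $B_{t+s}(p,2)$, and built from a fixed bounded profile applied to $\dist_{t+s}(p,\cdot)$, so that
$$|\nabla\eta|_{t+s}\le \tilde C, \qquad \Bigl|\frac{\partial}{\partial s}\eta\Bigr| \le \tilde C,$$
uniformly in $s$, where the second bound uses the evolution equation for the metric and $|R|\le C$ to control $\frac{\partial}{\partial s}\dist_{t+s}$.

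With this choice the computations (\ref{equation-equation_eq1})--(\ref{equation-equation_better}) carry over verbatim, the only new terms being $\int f^2\,\eta\,\partial_s\eta$ and $\int|\nabla f|^2\,\eta\,\partial_s\eta$ arising when differentiating $\int f^2\eta^2$ and $\int|\nabla f|^2\eta^2$ in $s$; by the bound on $\partial_s\eta$ these are controlled by $C\int f^2\eta^2$ and $C\int|\nabla f|^2\eta^2$ respectively, and are absorbed into the right-hand sides of (\ref{equation-equation_extension}) and (\ref{equation-equation_psi}). The interpolation bound (\ref{equation-n2}) uses only $\int_M f^{n/2}\,dV_t\le C$ and $\vol_t(M)\le C$, both of which persist. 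Applying Theorem \ref{theorem-theorem_Moser} as before and rescaling back to $g(t)$ yields (\ref{equation-equation_conclusion}), with $\xi_1,\xi_2$ as in Proposition \ref{lemma-lemma_regularity} and $C$ now depending on $n$, $C_S$ and the constant in (\ref{eq-new-cond}).
\end{proof}
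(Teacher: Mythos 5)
The paper's proof takes a genuinely different route from yours, and the difference is essential: your approach has a gap that the paper's construction is specifically designed to avoid. You propose to build the time-dependent cut-off $\eta(\cdot,t+s)$ from a profile applied to $\dist_{t+s}(p,\cdot)$ and to control $|\nabla\eta|_{t+s}$ and $|\partial_s\eta|$ via a ``standard distance-distortion estimate,'' claiming this can be done using $|R|\le C$ together with noncollapsing and the euclidean volume growth condition. But under the K\"ahler-Ricci flow $\partial_s g_{i\bar j}=g_{i\bar j}-R_{i\bar j}$, the rate of change of the length of a tangent vector $v$ is $\partial_s|v|_{g(t+s)}^2 = |v|^2 - \ric(v,\bar v)$, so distance distortion is governed by the \emph{Ricci} curvature, not the scalar curvature; neither a scalar curvature bound nor the pair (noncollapsing, euclidean volume growth) gives any pointwise control on $\ric$. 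The entire point of Corollary~\ref{cor-regularity} is to drop the uniform Ricci bound, so you cannot then invoke a distance-distortion estimate whose hypothesis is precisely the condition you have discarded. The phrase ``the lower Ricci-type control implicit in the noncollapsing and volume growth'' is not backed by any actual implication.

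The paper circumvents this entirely by evolving the cut-off by the \emph{heat equation} $\partial_s\eta=\Delta_{g(t+s)}\eta$ on $B_t(p,2r)$ with Dirichlet boundary data and initial data $\eta_0$. Under the K\"ahler-Ricci flow, this choice produces a clean evolution equation $\partial_s|\nabla\eta|^2 = \Delta|\nabla\eta|^2 - |\nabla\nabla\eta|^2 - |\nabla\bar\nabla\eta|^2$ in which the Ricci terms cancel, so the maximum principle gives $|\nabla\eta|^2(\cdot,s)\le C/r^2$ with no curvature hypothesis whatsoever. The price paid is that $\eta$ is no longer identically $1$ on an inner ball at later times, and the substance of the paper's proof is Claim~\ref{claim-eta}: a uniform positive lower bound $\eta(\cdot,t+s)\ge\delta_1$ on $B_t(p,\delta_2 r)$. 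That claim is proved by integrating the heat equation to get $\frac{d}{ds}\int_{B_t(p,2r)}\eta\,dV_{t+s}=\int\eta(k-R)\,dV_{t+s}$ (using only $|R|\le C$), deducing $\int\eta\ge Ar^n$ from the $\kappa$-noncollapsing, converting this to an averaged lower bound via the euclidean volume growth condition~(\ref{eq-new-cond}), and then propagating a pointwise lower bound back to $p$ using the uniform gradient bound and a chaining argument. This is where noncollapsing and volume growth are actually used; they feed into an integral argument, not a distance-distortion estimate. You should replace your cut-off construction with this heat-flow cut-off, after which the remaining Moser iteration steps do carry over essentially as you indicate.
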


\begin{proof}
Let $\eta_0$ be a cut off function as in Proposition \ref{lemma-lemma_regularity},
so that $|\nabla \eta_0|_t^2 \le \frac{C}{r^2}$,
where the norm of $\nabla\eta_0$ is taken with respect to metric $g(t)$.
Lets evolve $\eta(s)$ for $s\in [0,\delta]$, in a ball $B_t(p,2r)$ as follows,
(for our purposes we can assume $\delta$ being comparable with $r^2$)
\begin{equation}
\label{eq-eta}
\begin{split}
\frac{\partial}{\partial s}\eta &= \Delta\eta,\\
\eta(x,0) &= \eta_0(x), \\
\eta(x,s) &= 0, \,\, \mbox{for} \,\, (x,s) \in \partial B_t(p,2r)\times [0,\delta],
\end{split}
\end{equation}
where $\Delta$ is taken with respect to a changing metric $g(t+s)$.
The evolution equation for $\nabla\eta$ is
$$\frac{\partial}{\partial s}|\nabla\eta|^2 = \Delta|\nabla\eta|^2 - |\nabla\nabla \eta|^2
- |\nabla\bar{\nabla}\eta|^2,$$
where $|\nabla\eta|^2$ is taken with respect to $g(t+s)$. By the maximum 
principle it follows 
\begin{equation}
\label{eq-nabla-eta}
|\nabla\eta|^2(\cdot,s) \le \max_{B_t(p,2r)}|\nabla\eta_0|^2 \le \frac{C}{r^2},
\end{equation}
$$\eta(x,s) \ge 0, \qquad \mbox{for all} \,\, s\in [0,\delta],$$
$$\max_{B_t(p,2r)\times [0,\delta]}|\eta|(x,s) \le 1.$$
By the results in \cite{Ye} and \cite{Z} we have Sobolev inequlity (\ref{eq-sob-new})
holding uniformly along the flow. In the following claim we will show $\eta$ has
a uniform lower bound and therefore we can apply almost the same proof of
Proposition \ref{lemma-lemma_regularity} to get the result.

\begin{claim}
\label{claim-eta}
There exist  uniform constants $\delta_1, \delta_2 > 0$, independent of $t$ and $r > 0$, 
so that $\eta(x,t+s) \ge \delta_1 > 0$, for all $x\in B_t(p,\delta_2r)$ and all $s\in [0,\delta]$. 
\end{claim}

\begin{proof}
All we need to show is that $\eta(p,t+s) \ge \alpha$ for all $s\in [0,\delta]$ and a uniform
constant $\alpha$, independent of $t$, $p\in M$ and $r > 0$. This together with 
(\ref{eq-nabla-eta}) will yield the result.
If we integrate (\ref{eq-eta}) over $B_t(p,2r)$ we get
$$\frac{d}{ds}\int_{B_t(p,2r)}\eta(x,t+s)\, dV_{t+s} = \int_{B_t(p,2r)}\eta(x,t+s)(k - R)\, dV_{t+s},$$
which, since $\eta(\cdot,t+s) \ge 0$ implies
$$|\frac{d}{ds}\int_{B_t(p,2r)}\eta(x,t+s)\, dV_{t+s}| \le C\int_{B_t(p,2r)}\eta(x,t+s)\, dV_{t+s},$$
and therefore, since $\eta(x,t) = 1$ for $x\in B_t(p,r)$ and $\vol_t B_t(p,r) \ge \kappa r^n$,
\begin{eqnarray*}
\int_{B_t(p,2r)}\eta(x,t+s)\, dV_{t+s} &\ge& e^{-Cs} \int_{B_t(p,2r)}\eta(x,t)\,dV_t \ge 
C_1\int_{B_t(p,r)}\eta(x,t)\, dV_t \\
&\ge& C_1\kappa r^n = Ar^n, 
\end{eqnarray*}
for all $s\in [0,\delta]$, $p\in M$ and $r > 0$. By condition (\ref{eq-new-cond})
this implies
\begin{equation}
\label{eq-mv}
\frac{1}{\vol_{t+s} (B_t(p,2r))}\int_{B_t(p,2r)} \eta(x,t+s)\, dV_{t+s} \ge A_1,
\end{equation}
for a uniform constant $A_1$, independent of $r > 0$, $p\in M$ and $s\in [0,\delta]$.
Fix $s\in [0,\delta]$. 
By (\ref{eq-mv}) there exists at least one $q_1\in B_t(p,2r)$ so that
$\eta(q_1,t+s) \ge A_1$. By (\ref{eq-nabla-eta}) there exists a uniform constant
$\delta > 0$ so that $\eta(x,t+s) \ge A_1/2$ for all $x\in B_t(q_1,\delta r)$. If
$p\in B_t(q_1,\delta r)$ we are done. If not, let $r_1 = \dist_{t+s}(q_1,p)/4$ and
consider the ball $B_t(p, 2r_1)$. Radius $r_1$ will play the role of $r$ above. Since
$q_1$ is not in $B_t(p,2r_1)$, by the same arguments as before we can find $q_2 \in B_t(p,2r_1)$
so that $\eta(x,t+s) \ge A_1/2$ for all $x\in B_t(q_2,\delta r_1)$. This way we will
construct a sequence $\{q_j\}$ such that $q_j \to p$ as $j\to\infty$ and
$\eta(q_j,t+s) \ge A_1/2$. By the continuity of $\eta$, $\eta(p,t+s) \ge A_1/2 =: \alpha$.
Since $s$ was an arbitrary number in $[0,\delta]$, we are done.
\end{proof}

As we remarked earlier, having the uniform Sobolev inequality (\ref{eq-sob-new})
and Claim \ref{claim-eta} finishes the proof of Corollary \ref{cor-regularity}.
\end{proof}

\section{Convergence to a K\"ahler-Ricci soliton orbifold with finitely many
singulr points}
\label{section-compactness}

In this section we will prove the compactness result for the K\"ahler-Ricci flow,
which is dimension independent.

\begin{proof}[Proof of Theorem \ref{theorem-convergence}]
Take $\epsilon << \epsilon_0$  small, where $\epsilon_0$ is taken
from Proposition \ref{lemma-lemma_regularity}. Define 
\begin{eqnarray*}
D_i^r = \{x\in M\:\:\:
|\:\:\: \int_{B_{t_i}(p,2r)}|\rem|^{n/2}\, dV_{t_i} < \epsilon\}, \\
L_i^r = \{x\in M \:\:\:|\:\:\: \int_{B_{t_i}(x,2r)}|\rem|^{n/2}\, dV_{t_i} > \epsilon\}.
\end{eqnarray*}
By the covering argument, similarly as in \cite{natasa2005} we can get
there is a uniform upper bound on a number $N$ of points,
$\{x_{i1}^r,\dots, x_{iN}^r\}$, that are the centres of balls of radius $2r$
covering sets $L_i^r$, so that the corresponding concentric balls of
radius $r$ are disjoint. Notice that $N$ is independent of both $i$ and $r > 0$.
To see that, assume
$$\int_{B_{t_i}(x_{ij}^r,2r)}|\rem|^{n/2}dV_{t_i} \ge \epsilon.$$
By the $\kappa$-noncollapsing that was proved by Perelman (see \cite{se_ti}),
there is a uniform constant $\kappa > 0$ so that
\begin{equation}
\label{eq-kappa-non}
\vol_t B_t(p,r) \ge \kappa r^n, \qquad \mbox{for all} \,\, t > 0 \,\, \mbox{and all}
\,\, r > 0 \,\, p\in M.
\end{equation}
Since $\ric \ge -C$, by Bishop-Gromov comparison principle, for every
$r > \delta$ and every $p\in M$,
$$\frac{\vol_t B_t(p,r)}{V_{-C}(r)} \le \frac{\vol_t B_t(p,\delta)}{V_{-C}(\delta)},$$
where $V_{-C}(r)$ is the volume of a ball of radius $r$ in a space form of
constant sectional curvature $-C$. If we let $\delta\to 0$, the right hand side
of the previous inequality converges to a constant equal to the volume 
of a unit ball in $\mathbb{R}^n$. Call it $w_n$. Then,
\begin{equation}
\label{eq-vol-upper}
\frac{\vol_t B_t(p,r)}{r^n} \le w_n\frac{V_{-C}(r)}{r^n} \le C_1,
\end{equation}
for a uniform constant $C_1$ and  $r \le r_0$.
Notice that (\ref{eq-kappa-non}) and (\ref{eq-vol-upper}) imply there is a uniform upper
bound $m$ on the number of disjoint balls of radius $r$ contained in a ball of
radius $2r$.  Then,
\begin{eqnarray*}
N\epsilon &\le& \sum_j \int_{B_{t_i}(x_{ij}^r,2r)}|\rem|^{n/2} \, dV_{t_i}\\
&\le& m\int_M|\rem|^{n/2}dV_{t_i} \le  Cm = \tilde{C},
\end{eqnarray*}
which yields
\begin{equation}
\label{eq-upper-N}
N \le \frac{\tilde{C}}{\epsilon}.
\end{equation}
In \cite{natasa_KR} we have proved that given the K\"ahler-Ricci flow with
uniformly bounded Ricci curvatures, then for every sequence
$t_i\to\infty$ there exists a subsequence such that $(M,g(t_i + t))\to
(Y,\bar{g}(t))$. The convergence is smooth outside a singular set
$S$, which is closed and at least of codimension four. In \cite{se_ti}
we showed that $\bar{g}(t)$ solves the K\"ahler-Ricci soliton equation 
off $S$. 

\begin{proposition}
\label{prop-finite}
The closed set $S$ consists of finitely many points.
\end{proposition}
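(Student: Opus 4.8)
The plan is to show that the singular set $S$ of the limit is covered, for \emph{every} $r>0$, by at most $N=\tilde C/\epsilon$ metric balls in $(Y,\bar g)$ of radius comparable to $r$, where $N$ is precisely the bound obtained above and is independent of both $i$ and $r$; since $N$ is fixed while $r$ may be taken arbitrarily small, $S$ is then forced to be finite, with $|S|\le\tilde C/\epsilon$.

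First I would record the qualitative consequence of the $\epsilon$-regularity lemma. Fix $r>0$ and keep the sets $L_{i}^{r}$ together with the covering of $L_{i}^{r}$ by at most $N$ balls $B_{t_{i}}(x_{ij}^{r},2r)$. If $x\in M\setminus L_{i}^{r}$, then $\int_{B_{t_{i}}(x,2r)}|\rem(g(t_{i}))|^{n/2}\,dV_{t_{i}}\le\epsilon$, so, having chosen $\epsilon<\epsilon_{0}$ with room to spare and using that on the short interval $[t_{i},t_{i}+\delta]$, $\delta\sim r^{2}$, the integral over a slightly smaller ball stays below $\epsilon_{0}$, Proposition~\ref{lemma-lemma_regularity} (respectively Corollary~\ref{cor-regularity}) applies at $x$ and gives a bound
$$\sup_{B_{t_{i}}(x,cr)\times[\xi_{2}\delta,\delta]}|\rem(g(t_{i}+s))|\le C(r),$$
for a dimensional constant $c$ and with $C(r)$ \emph{independent of $i$}. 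Thus, away from the neighbourhood $U_{i}^{r}:=\bigcup_{j}B_{t_{i}}(x_{ij}^{r},3r)$ of $L_{i}^{r}$, the curvature of $g(t_{i}+\delta)$ is uniformly bounded; combined with Shi-type local derivative estimates and the $\kappa$-noncollapsing (\ref{eq-kappa-non}), this upgrades to bounds on all covariant derivatives of the curvature there. Hence the convergence $(M,g(t_{i}+t))\to(Y,\bar g(t))$ from \cite{natasa_KR} is in fact smooth on the complement of the Gromov--Hausdorff limit of $U_{i}^{r}$, so $S\subseteq\bigcup_{j=1}^{N}\overline{B_{\infty}(y_{j}^{r},3r)}$, where $y_{j}^{r}\in Y$ is a subsequential limit of $x_{ij}^{r}$ (padded by repetition to exactly $N$ indices).

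Now I would conclude by an elementary counting argument. Suppose $S$ contained $N+1$ distinct points $p_{0},\dots,p_{N}$, and set $2\rho=\min_{k\ne l}\dist_{\bar g}(p_{k},p_{l})>0$. Applying the previous paragraph with $r<\rho/3$ produces a cover of $S$, hence of $\{p_{0},\dots,p_{N}\}$, by $N$ balls of radius $3r<\rho$ in $Y$; as each such ball contains at most one $p_{k}$, this is impossible. Therefore $|S|\le N\le\tilde C/\epsilon<\infty$, i.e.\ $S$ consists of finitely many points. (Equivalently, one may extract a weak limit $\mu$ on $Y$ of the measures $|\rem(g(t_{i}+s))|^{n/2}\,dV$, of total mass $\le C$; the bound above shows every point of $S$ is an atom of $\mu$ of mass at least $\epsilon$, giving the same estimate on $|S|$.)

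The main obstacle is the step asserting that, outside $U_{i}^{r}$, the curvature of $g(t_{i}+\delta)$ is bounded \emph{uniformly in $i$}: this is exactly where the $\epsilon$-regularity lemma must be invoked with care, since its hypothesis is an integral bound over a whole short time interval rather than a single time slice, and one must check that the resulting constant depends only on $n$, $C_{S}$ and $r$ (not on $i$ or the base time $t_{i}$). Once this uniform bound is secured, identifying $S$ with the limit of the concentration set $\bigcup_{j}B_{t_{i}}(x_{ij}^{r},3r)$ and the final counting are routine.
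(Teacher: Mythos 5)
Your overall strategy is genuinely different from the paper's: you aim to show directly that, for every $r>0$, the singular set $S$ is covered by at most $N=\tilde C/\epsilon$ balls of radius comparable to $r$ in the limit space, and then let $r\to 0$. The paper instead argues by contradiction: assuming $S$ is infinite, it produces more than $N$ disjoint balls near well--separated parts of $S$, picks curvature blow--up points $q_j^i$ in each via a point--picking argument, and shows (via Claims~\ref{claim-pi} and~\ref{claim-bad-set}) that each such ball carries $\epsilon$ of $\int|\rem|^{n/2}$ at time $t_i$, contradicting~(\ref{eq-upper-N}). In spirit your approach is the contrapositive --- going from the concentration set to $S$ rather than from $S$ to the concentration set --- and your counting and weak--limit remarks at the end are fine. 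What each approach ``buys'' is different: yours, if it worked, would give a cleaner quantitative bound $|S|\le\tilde C/\epsilon$ with no contradiction scaffolding; the paper's route is more roundabout but exactly tailored to sidestep the time--propagation issue discussed below.

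However, there is a genuine gap at the step you yourself flag as ``the main obstacle,'' and it is not just a matter of checking constants. To invoke Proposition~\ref{lemma-lemma_regularity} at a point $x\in M\setminus L_i^r$ you need the hypothesis~(\ref{equation-equation_condition}), namely smallness of $\int_{B_{t_i}(x,3r)}|\rem|^{n/2}\,dV_{t_i+s}$ \emph{for all $s$ in the time interval $[0,\delta]$}. What you have is smallness only at the single time slice $s=0$, and the assertion that ``the integral over a slightly smaller ball stays below $\epsilon_0$'' on $[t_i,t_i+\delta]$ is not automatic: $\frac{d}{ds}\int_B|\rem|^{n/2}\,dV_{t_i+s}$ involves a term of order $\int_B|\rem|^{n/2+1}$ coming from the $\rem*\rem$ part of the curvature evolution, which is not controlled by $\int_B|\rem|^{n/2}$. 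This is precisely what the paper's Claim~\ref{claim-pi} is designed to handle, and its proof is a nontrivial compactness/rescaling argument that \emph{crucially} uses the hypothesis that $(q_i,t_i)$ realizes the maximum of $|\rem|$ over a parabolic neighbourhood: the max condition is what makes the blow--up by $Q_i=|\rem|(q_i,t_i)$ sensible and produces a nonflat complete limit, forcing the contradiction~(\ref{eq-bad1}). At a generic point $x\in M\setminus L_i^r$ there is no such max, so the paper's argument for time--propagation does not transfer to your setting, and you would need a separate argument. (There is also a smaller loose end: your regularity estimate bounds $|\rem|$ at time $t_i+\delta$ on $B_{t_i}(x,cr)$; to conclude that the convergence of $g(t_i+t)$ is smooth near $x$ at $t=0$, and hence that $S$ lies in the limit of $U_i^r$, one should either center the time interval at $t_i$ or appeal to backward parabolic regularity --- both doable, but worth stating.) Until the time--interval propagation is supplied, the covering of $S$ by $N$ balls of radius $3r$ is not established, and the concluding counting step has nothing to stand on.
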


\begin{proof}
The proof goes by contradiction. Assume the proposition is false.
Since $S$ is a closed subset of $Y$, for every $r$ we can find 
a finite cover of $S$ with balls of radius $2r$ so that the corresponding 
concentric balls of radius $r$ stay disjoint. Choose $r > 0$ small so that
the number $L$ of above balls of radius $2r$ covering $S$ is bigger than 
$1000[\frac{\tilde{C}}{\epsilon}]$, where the constants are taken 
from (\ref{eq-upper-N}). We can always do that if $S$ is not just a 
set of finitely many isolated points. Denote by $\bar{p}_j$, for $1 \le j \le L$
the centers of those balls. Let $S(r) = \cup_{j=1}^L B_{\bar{g}}(\bar{p}_j,2r)$.
From \cite{natasa_KR} there are points $p_j^i \in M$ and 
diffeomorphisms $\phi_i$ from
$Y\backslash S(r)$ into $M$, containing $M\backslash S_i(3r)$, where
$S_i(3r) = \cup_{j=1}^L B_{t_i}(p_j^i,3r)$, such that $\phi_i^*g_i$
converges smoothly to $\bar{g}$. That means the curvatures $|\rem|(\cdot,t_i)$
are uniformly bounded on $M\backslash S_i(3r)$. We may assume the balls
$B_{t_i}(p_j^i,r/2)$ are disjoint for $i$ big enough. Since the reason for the 
formation of the singular set $S$ is the curvature blow up,
we can find points $q_j^i\in B_{t_i}(p_j^i,3r)$ so that 
$\max_{B_{t_i}(q_j^i,4r)\times [-r^2,r^2]} 
|\rem|(x,t_i + s) = |\rem|(q_j^i,t_i) := Q_j^i$ (at least for $i$
big enough). If that maximum is attained at some time $s \neq 0$,
we can just replace $t_i$ by $t_i +s$  and continue the consideration.
We may assume $Q_j^i \to \infty$ as $i\to\infty$ for every $j$,
since otherwise $S_i(3r) \cap B_{t_i}(q_j^i,4r)$ would at the same time give rise to a
part of the  singular set $S$ and also converge to a smooth part of $Y$ (due
to the uniform curvature bounds), which is not possible.

\begin{claim}
\label{claim-pi}
There exists a uniform constant $C_1$, so that for every $r > 0$,
every sequence of points $q_i \in M$ and a sequence of times $t_i \to \infty$,
with the property that
$\max_{B_{t_i}(q_i,r)\times [-r^2,r^2]}|\rem|(x,t_i+s) = |\rem|(q_i,t_i)$,
we have that for every $0 < \rho \le r$ there exists  an $i_0$ with the property  
\begin{equation}
\label{eq-equiv-int}
\int_{B_{t_i}(q_i,\rho)} |\rem|^{\frac{n}2}(t_i+s)\, dV_{t_i+s} \le 
C_1 \int_{B_{t_i}(q_i,\rho)} |\rem|^{\frac{n}2}\, dV_{t_i},
\end{equation}
for all $s\in [-\rho^2,\rho^2]$ and all $i\ge i_0$. 
\end{claim}

\begin{proof}
Assume the claim were not true. Then for every $j$ there would exist an $0 < \rho_j \le r_j$,
so that for every $i$ there would exist  $k_{ij} > i$ with the property,
\begin{equation}
\label{eq-bad}
\int_{B_{t_{k_{ij}}}(q_{k_{ij}},\rho_j)}|\rem|^{\frac{n}{2}}\, dV_{t_{k_{ij}}} 
\le \frac{1}{j}\int_{B_{t_{k_{ij}}}(q_{k_{ij}},\rho_j)} 
|\rem|^{\frac{n}{2}}(t_{k_{ij}}+s_{k_{ij}})\, dV_{t_{k_{ij}}+s_{k_{ij}}},
\end{equation}
for some $s_{k_{ij}} \in [-r_j^2,r_j^2]$. Since $Q_i\to \infty$, 
we may choose a subsequence $k_{ij}$  
so that $\rho_j \ge (Q_{k_{ij}})^{-1}$. If we define a sequence 
of rescaled metrics $\tilde{g}_{k_{ij}}(\tau) = 
Q_{k_{ij}}g(t_{k_{ij}}+\tau (Q_{k_{ij}})^{-1})$, 
(\ref{eq-bad})
can be rewritten as
\begin{equation}
\label{eq-bad1}
\int_{B_{\tilde{g}_{k_{ij}}(0)}(q_{k_{ij}},r_j(Q_{k_{ij}})^{-1/2})}
|\rem|^{\frac{n}{2}}\, dV_{\tilde{g}_{k_{ij}}(0)} \le \frac{C}{j},
\end{equation}
since $\int_M|\rem|^{\frac{n}{2}}\, dV_t \le C$, uniformly along the flow. 
The pointed sequence of solutions $(B_{\tilde{g}_{k_{ij}}(\tau)}
(q_j^{k_{ij}},r_j\sqrt{Q_{k_{ij}}}), \tilde{g}_{k_{ij}}(\tau), q_{k_{ij}})$
converge to a complete solution $(X,\tilde{g}_{\infty}(\tau), q_{\infty})$ with 
$|\rem|_{\tilde{g}_{\infty}}(q_{\infty},0) = 1$. Therefore taking the limit as $i, j\to\infty$ in 
(\ref{eq-bad1}) yields a contradiction since
$$0 \neq \int_X |\rem|^{\frac{n}{2}}\, dV_{\tilde{g}_{\infty}} \le \liminf_{i,j\to\infty}
\int_{B_{\tilde{g}_{k_{ij}}(0)}(q_{k_{ij}},r_j\sqrt{Q_{k_{ij}}})}
|\rem|^{\frac{n}{2}}\, dV_{\tilde{g}_{k_{ij}}(0)} = 0.$$
\end{proof}

Take $C_1$ as in the Claim \ref{claim-pi}. Take $\epsilon > 0$ 
small so that $C_1\epsilon < \epsilon_0$,
where $\epsilon_0$ is taken from Proposition \ref{lemma-lemma_regularity}.
If $r > 0$ and a sequence $\{q_j^i\}$ (for $1 \le j \le L$) are as we have
constructed in the paragraph just prior to Claim \ref{claim-pi} (depending on the
chosen $\epsilon$), then we have the following:

\begin{claim}
\label{claim-bad-set}
There exists $i_0$ so that 
$$\int_{B_{t_i}(q_j^i,r)}|\rem|^{\frac{n}{2}}\, dV_{t_i} > \epsilon, \qquad \mbox{for}
\,\, i\ge i_0,$$
for all $1\le j \le L$.
\end{claim}

\begin{proof}
Assume the claim is not true. Then for some $j\in \{1,\dots, L\}$, by Claim \ref{claim-pi},
we have
$$\int_{B_{t_i}(q_j^i,r)}|\rem|^{\frac{n}{2}}\, dV_{t_i+s} \le C_1\epsilon < \epsilon_0,$$
for all $s\in [-r^2,r^2]$. By Proposition \ref{lemma-lemma_regularity},
$$|\rem|(q_j^i,t_i) \le \frac{C}{r^2}, \qquad \mbox{for} \,\, i\ge i_0,$$
which contradicts $Q_j^i\to\infty$ as $i\to\infty$.
\end{proof}

The previous discussion and Claim \ref{claim-bad-set} imply that there is an
$i_0$ so that for every $\bar{p}_j$, where $1 \le j \le L$, we can find a 
sequence of points $\{q_j^i\}_{i\ge i_0} \in M$ so that
\begin{equation}
\label{eq-bigger-e}
\int_{B_{t_i}(q_j^i,2r)}|\rem|^{\frac{n}{2}}\, dV_{t_i} > \epsilon,
\end{equation}
for all $i \ge i_0$.  By our careful choices of $r$ and the covering of $S$,
(by increasing the constant $1000$ in the number of balls of radius $2r$
covering $S$, if necessary), for every fixed $i \ge i_0$,
the number of points $q_j^i \in L_i^r$ with the property that the balls $B_{t_i}(q_j^i,r)$ are
disjoint is at least $[\frac{\tilde{C}}{\epsilon}] + 2$,  which contradicts (\ref{eq-upper-N}).
This finishes the proof of Proposition \ref{prop-finite}.  
\end{proof}

Combining Proposition \ref{prop-finite} and  the results from \cite{natasa_KR},
so far we have proved the following: if $g(t)$ is the K\"ahler-Ricci flow, with
$|\ric|(\cdot,t) \le C$ and $\int_M|\rem|^{\frac{n}{2}}\, dV_t \le C$,
uniformly along the flow, then for every sequence of times $t_i\to\infty$
there exists a subsequence so that $(M,g(t_i+t))$ converges to
$(M_{\infty},g_{\infty}(t))$ in the following sense. There are finitely many 
points $p_1,\dots,p_N\in M_{\infty}$ and $\{p_j^i\} \in M$, for $1\le j\le N$,
so that for every $r > 0$ there are diffeomorphisms $\phi_i$
from $M_{\infty}\backslash \cup_{j=1}^N B_{\infty}(p_j,r)$ into $M$,
with the image of $\phi_i$ containing $M\backslash \cup_{i=1}^NB_{t_i}(p_j^i,2r)$,
and $\phi_i^* g(t_i+t)$ smoothly converging to the K\"ahler-Ricci soliton $g_{\infty}(t)$,
outside the singular points. This in particular means $M_{\infty}$ is smooth
outside finitely many points. We would like to understand the structure of
those singular points.  

By Fatou's lemma,
$$\int_{M_{\infty}}|\rem|^{\frac{n}{2}}\, dV_{\infty} \le C < \infty.$$
By the continuity of volume under the condition of the lower bound on Ricci curvatures
of a sequence of manifolds (see \cite{C}), we have
$$\lim_{i\to\infty} \vol_{t_i}(B_{t_i}(x_i,r)) = \vol_{\infty}(B_{\infty}(x_{\infty},r)),$$
for every sequence $x_i\in M$ and $r > 0$ such that a sequence of balls
$B_{t_i}(x_i,r)$ converges in Gromov-Hausdorff topology, as $i\to\infty$, to 
a ball $B_{\infty}(x_{\infty},r)$. This together with (\ref{eq-vol-upper}) yield
$$\vol_{\infty}(B_{\infty}(x,r)) \le 2C_1 r^n, \qquad \mbox{for all} \,\,x\in M_{\infty}.$$
Let $\epsilon << \epsilon_0$. The previous estimate implies there is an $r_0$
so that for $r \le r_0$, for every $x\in M_{\infty}$,
$$\int_{B_{\infty}(x,r)}|\rem|^{\frac{n}{2}}\, dV_{g_{\infty}(t)} < \epsilon,$$
for all $t\in [-2r^2,2r^2]$. Fix some $r > 0$ and denote by  $D_i^r = M\backslash
(\cup_{j=1}^N B_{t_i}(p_j^i,r))$. By the definition of convergence,
there exists an $i_0$ so that for $i \ge i_0$, all $x\in D_i^{2r}$ and
$t\in [-r^2,r^2]$,
$$\int_{B_{t_i}(x,r)}|\rem|^{\frac{n}{2}}\, dV_{t_i + t} < 2\epsilon < \epsilon_0.$$ 
By Proposition \ref{lemma-lemma_regularity} we have
$$\sup_{[t_i-\delta r^2,t_i+\delta r^2]\times D_i^{2r}}|\rem| \le \frac{C}{r^2},$$
for uniform constants $C$ and $\delta$. By Shi's estimates,
$$\sup_{[t_i-\delta_1 r^2,t_i+\delta_1 r^2]\times D_i^{2r}}|D^k\rem| \le C(k,n,r),$$
where we can assume without no loss of generality that $\delta_1 = 1$.
We can extract a subsequence, so that $(D_i^{2r},g(t_i+t))$
converges to a smooth solution to the Ricci flow, $(D_{\infty}^{2r},g_{\infty}(t))$,
for $t\in [-r^2,r^2]$. As in \cite{se_ti} and \cite{natasa_KR}, we can show
$g_{\infty}(t)$ satisfies the K\"ahler Ricci soliton equation,
\begin{eqnarray}
\label{equation-soliton-limit}
& & \ric(g_{\infty}) + \nabla\bar{\nabla}f_{\infty} - g_{\infty} = 0, \\
& & \nabla\nabla f_{\infty} = 0. \nonumber
\end{eqnarray}

We now choose a sequence $\{r_l\}\to 0$ with $r_{l+1} < r_l/2$ and
perform the above construction for every $l$. If we set $D_i(r_l) =
\{x\in M|x\in D_i^{r_j}$, for some $j\le l\}$ then we
have
$$D_i(r_l)\subset D_i(r_{l+1}) \subset\dots \subset M.$$
For each fixed $r_l$, by the same arguments as above, each sequence
$\{D_i(r_l),g(t_i+t)\}$ for $t\in [-r_l^2,r_l^2/2]$, has a smoothly
convergent subsequence to a smooth limit $D(r_l)$ with a metric
$g_{\infty}^{r_l}$, satisfying the K\"ahler Ricci soliton condition.
We can now set $D = \cup_{l=1}^{\infty}D(r_l)$ with the induced metric
$g_{\infty}$ that coincides with $g^{r_l}$ on $D(r_l)$ and which is
smooth on $D$.

Following section $5$ in \cite{anderson1989} we can show there are
finitely many points $\{x_i\}$ so that $M_{\infty} = D\cup \{x_i\}$ is
a complete length space with a length function $g_{\infty}(0)$, which
restricts to the K\"ahler Ricci soliton on $D$. By
(\ref{equation-soliton-limit}), since $|\ric(g_{\infty}(t))| \le C$,
we get,
$$\sup_{M_{\infty}\backslash \{x_1,\dots,x_N\}}|D^2 f_{\infty}| \le \tilde{C}.$$
Similarly as in \cite{se_ti} we get $|\nabla f_{\infty}|, |f_{\infty}|$
are uniformly bounded on $M_{\infty}\backslash \{x_1,\dots,x_N\}$.

We will include below the computation from \cite{natasa2005}, that
holds for all $x\in M_{\infty}\backslash \{p_1,\dots,p_N\}$.
Denote $\rem(g_{\infty})$ shortly by $\rem$.  By Bochner-Weitzenbock
formulas we have
\begin{equation}
\label{equation-bochner}
\Delta|\rem|^2 = -2\langle\Delta \rem,\rem\rangle + 2|\nabla\rem|^2
- \langle Q(\rem),\rem\rangle,
\end{equation}
where $Q(\rem)$ is quadratic in $\rem$. The Laplacian of the
curvature tensor in the K\"ahler case reduces to
$$\Delta R_{i\bar{j}k\bar{l}} = \nabla_i\nabla_{\bar{l}}R_{\bar{j}k} + 
\nabla_{\bar{j}}\nabla_kR_{i\bar{l}} + S_{i\bar{j}k\bar{l}},$$ 
where $S(\rem)$ is quadratic in $\rem$. Since our metric $g_{\infty}$ is the
soliton metric $g_{\infty}$, satisfying (\ref{equation-soliton-limit}) outside
$\{p_1,\dots,p_N\}$, by commuting the
covariant derivatives, we get
\begin{eqnarray}
\label{equation-simpler}
\Delta R_{i\bar{j}k\bar{l}} &=& (f_{\infty})_{\bar{j}k\bar{l}i} +(f_{\infty})_{i\bar{l}k\bar{j}} +
S_{i\bar{j}k\bar{l}} \nonumber \\
&=& (f_{\infty})_{\bar{j}\bar{l}ki} + \nabla_i(R_{\bar{j}k\bar{l}m}(f_{\infty})_m) +
(f_{\infty})_{ik\bar{l}\bar{j}} + \nabla_{\bar{j}}(R_{i\bar{l}k\bar{m}}(f_{\infty})_{\bar{m}}) 
+ S_{i\bar{j}k\bar{l}} \nonumber \\
&=& \nabla_i(R_{\bar{j}k\bar{l}m})(f_{\infty})_m + \nabla_{\bar{j}}
(R_{i\bar{l}k\bar{m}})(f_{\infty})_{\bar{m}} + S_{i\bar{j}k\bar{l}} \nonumber \\
&=& \nabla\rem * \nabla (f_{\infty}),
\end{eqnarray} 
where we have effectively used the fact that $(f_{\infty})_{ij} =
(f_{\infty})_{\bar{i}\bar{j}} = 0$ and $A*B$ denotes any tensor
product of two tensors $A$ and $B$ when we do not need precise
expressions.  By using that $|f_{\infty}|_{C^1(M_{\infty}\backslash
  \{p_1,\dots,p_N\}} \le C$, and identities
(\ref{equation-bochner}) and (\ref{equation-simpler}) we get,
$$\Delta|\rem|^2 \ge -C|\nabla\rem||\rem| + 2|\nabla\rem|^2 -
C|\rem|^3.$$
By interpolation inequality we have
\begin{eqnarray*}
\Delta|\rem|^2 &\ge& (2-\theta)|\nabla\rem|^2 - C(\theta)|\rem|^2 -
C|\rem|^3 \\
&\ge& (2-\theta)|\nabla|\rem||^2 - C(\theta)|\rem|^2 - C|\rem|^3,
\end{eqnarray*}
for some small $\theta$. Also,
$$\Delta|\rem|^2 = 2\Delta|\rem||\rem| + 2|\nabla|\rem||^2,$$
and therefore,
\begin{equation}
\label{equation-curv}
\Delta|\rem||\rem| \ge -\theta/2|\nabla|\rem||^2 - C(\theta)|\rem|^2
- C|\rem|^3.
\end{equation}

By the same arguments as in \cite{natasa2005} (see sections $5$ and $6$),
we can show
\begin{enumerate}
\item [(a)] $M_{\infty} = D\cup \{p_1,\dots,p_N\}$ is a complete
orbifold with isolated singularities $\{p_1,\dots,p_N\}$.
\item[(b)] 
A limit metric $g_{\infty}$ on $D$ can be extended to an
orbifold metric on $M_{\infty}$ (denote this extension by $g_{\infty}$
as well). More precisely, in an orbifold lifting around singular
points, in an appropriate gauge, the K\"ahler Ricci soliton metric
$g_{\infty}$ can be smoothly extended over the origin in a ball in
$\mathrm{C}^{n/2}$.
\end{enumerate}
This finishes the proof of Theorem \ref{theorem-convergence}.  
\end{proof}

\section{Convergence of plurianticanonical divisors}
\label{section-tian}

We will prove Theorem \ref{thm-curv-bound} by contradiction.
Assume there is a sequence $t_i\to\infty$ such that $Q_i := \max_M|\rem|(\cdot,t_i) \to\infty$
as $i\to\infty$. Considering the sequence of solutions $(M,g(t_i+t))$ and using 
the ideas of Tian, we will show that a subsequence $(M,g(t_i))$ smoothly converges to a 
smooth K\"ahler manifold $(M_{\infty},g_{\infty})$, which will contradict the fact $Q_i\to\infty$.
By Theorem \ref{theorem-convergence}, we may assume $(M,g(t_i+t))$ converges
to a K\"ahler Ricci soliton $(M_{\infty},g_{\infty})$ in an orbifold sense, with
finitely many orbifold points $p_1, p_2, \dots p_l$. 

A line bundle $E$ on $M_{\infty}$ is a line bundle on the regular part $M_{\infty}'$
such that for each local lifting of a singular point $x$,  $\pi_x: \tilde{B}_x\to M_{\infty}$, 
the pullback $\pi_x^*E$ on $\tilde{B}_x\backslash \pi^{-1}(x)$ can be extended to the
whole $\tilde{B}_x$. We will consider plurianticanonical line bundles $K_{M_{\infty}}^{-m}$
for $m\in\mathbb{N}$. A golbal section of $K_{M_{\infty}}^{-m}$ is an element in
$H^0(M_{\infty}',K_{M_{\infty}}^{-m})$, which can be extended across the sigular points
in the above sense. Therefore, $H^0(M_{\infty},K_{M_{\infty}}^{-m})$ can be understood
as a linear space of all global sections of $K_{M_{\infty}}^{-m}$ and $g_{\infty}$ induces
a hermitian orbifold metric on $K_{M_{\infty}}^{-m})$ .

The proof of Theorem \ref{thm-curv-bound} will follow from a sequence of 
lemmas and claims. The following Lemma is taken from \cite{Ti} and \cite{tian1990}.

\begin{lemma}
\mbox{}
\label{lem-section-1}
\begin{enumerate}
\item[(i)]
Let $S^i$ be a global holomorphic section of $H^0(M,K_M^{-m})$, 
$\int_M ||S^i||_{g(t_i})\,dV_{t_i} = 1$, where $||\cdot||_{g(t_i)}$ is the hermitian metric 
of $K_M^{-m}$ induced by $g(t_i)$. Then there is a subsequence 
$\{S^i\}$ converging to a global holomorphic section $S^{\infty}$ in 
$H^0(M_{\infty},K_{M_{\infty}}^{-m})$. 
\item[(ii)]
Any section $S$ in $H^0(M_{\infty},K_{M_{\infty}}^{-m})$ is the limit of some
sequence $\{S^i\}$ where $S^i\in H^0(M,K_M^{-m})$.
\end{enumerate}
In particular, (i) and (ii) imply the dimension of $H^0(M,K_M^{-m})$ is the same as that
of $H^0(M_{\infty},K_{M_{\infty}}^{-m})$.
\end{lemma}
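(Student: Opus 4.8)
The plan is to adapt the two-sided convergence argument of Tian (\cite{Ti}, \cite{tian1990}) to the present setting, where the orbifold limit $(M_\infty, g_\infty)$ is a K\"ahler--Ricci soliton rather than K\"ahler--Einstein. The essential input is Theorem \ref{theorem-convergence}: away from the finitely many orbifold points $p_1,\dots,p_l$, the metrics $g(t_i)$ converge smoothly (via diffeomorphisms $\phi_i$) to $g_\infty$, and near each $p_j$ the convergence is smooth in an orbifold lifting $\pi_j \colon \tilde B_j \to M_\infty$ to a ball in $\mathbb{C}^{n/2}$. Since $K_M^{-m}$ carries the hermitian metric induced by $g(t_i)$ and its curvature is (up to a factor) the Ricci form, the pointwise norms $\|S^i\|_{g(t_i)}$ and the curvatures of the Chern connections converge correspondingly under $\phi_i$.

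For part (i): first I would use the $L^1$ normalization $\int_M \|S^i\|_{g(t_i)}\, dV_{t_i} = 1$ together with a mean-value / elliptic estimate for holomorphic sections (a Bochner formula: $\Delta \log \|S^i\|^2 = -m\cdot(\text{trace of curvature of } K_M^{-m}) = -m(k - R) \geq -C$ away from zeros, giving a subharmonicity-type bound) to obtain a uniform $C^0$ bound on $\|S^i\|_{g(t_i)}$ on compact subsets of the regular part. Combined with the Cauchy estimates for holomorphic functions on the trivializing charts, this yields uniform $C^k_{\mathrm{loc}}$ bounds on every compact subset of $M_\infty \setminus \{p_1,\dots,p_l\}$, so by a diagonal argument a subsequence converges in $C^\infty_{\mathrm{loc}}$ on the regular part to a holomorphic section $S^\infty$ of $K_{M_\infty}^{-m}$ over $M_\infty'$. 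The key point is then to show $S^\infty$ extends holomorphically across each orbifold point: in the orbifold lifting $\tilde B_j$, the pulled-back sections $\pi_j^*\phi_i^* S^i$ are honest holomorphic sections on $\tilde B_j \setminus \pi_j^{-1}(p_j)$ with uniformly bounded norm (the normalization $\int \|S^i\|\, dV \le 1$ survives the lifting up to the order of the local group), so by the removable singularity theorem for bounded holomorphic functions (Riemann extension across a point, i.e. a set of complex codimension $\geq 1$ in $\mathbb{C}^{n/2}$) they extend across $\pi_j^{-1}(p_j)$, and the limit $S^\infty$ does too — this is exactly the definition of a global section of $K_{M_\infty}^{-m}$ in the orbifold sense. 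One also checks $S^\infty \not\equiv 0$: the $L^1$ mass cannot all escape to the isolated points because, by the volume upper bound $\vol_\infty(B_\infty(x,r)) \le 2C_1 r^n$ and the uniform $C^0$ bound, $\int_{B_\infty(p_j,r)} \|S^\infty\| \,dV_\infty \to 0$ as $r \to 0$, so a definite amount of mass survives on the regular part.

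For part (ii): given $S \in H^0(M_\infty, K_{M_\infty}^{-m})$, I would pull it back via $\phi_i^{-1}$ on the region where $\phi_i$ is defined, cut it off smoothly away from the $p_j$'s, and correct the resulting \emph{almost-holomorphic} section $\tilde S^i$ by solving a $\bar\partial$-equation on $M$: writing $\bar\partial \tilde S^i = \beta_i$, where $\beta_i$ is supported in shrinking neighborhoods of the $p_j^i$ and is small in $L^2$ (again using the volume bound and the fact that $S$ is bounded near the orbifold points), Hörmander's $L^2$-estimate with weight given by the hermitian metric on $K_M^{-m}$ — whose curvature is positive since $c_1(M) > 0$ and which, along the flow, is controlled by Perelman's bound $|R| \le C$ — produces $u_i$ with $\bar\partial u_i = \beta_i$ and $\|u_i\|_{L^2} \to 0$; then $S^i := \tilde S^i - u_i$ is the desired holomorphic section, and elliptic regularity plus the $L^2 \to 0$ estimate give $S^i \to S$ in $C^\infty_{\mathrm{loc}}$ on the regular part. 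The dimension count then follows: part (i) gives $\dim H^0(M,K_M^{-m}) \geq \dim H^0(M_\infty, K_{M_\infty}^{-m})$ (an independent set of limits comes from an independent set — one must check the limiting sections stay linearly independent, which follows from $C^0$-closeness of the Gram matrices of the $L^2$ inner products), and part (ii) gives the reverse inequality.

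The main obstacle I expect is the extension across the orbifold points and the accompanying non-degeneracy, i.e. controlling the sections and the $\bar\partial$-solutions uniformly on the shrinking necks around $p_j$: one must know that essentially no $L^1$ (resp. $L^2$) mass concentrates there, which is where the Euclidean volume growth / volume upper bound $\vol_\infty(B_\infty(x,r)) \le 2C_1 r^n$ from the proof of Theorem \ref{theorem-convergence} is indispensable, and where the fact that the limit metric solves the soliton equation (rather than being Einstein) forces one to track the extra first-order term $\nabla f_\infty$ in the Bochner-type inequalities — though since $|f_\infty|_{C^1}$ is bounded on the regular part by Theorem \ref{theorem-convergence}, these corrections are harmless.
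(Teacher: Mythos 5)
Your proposal matches the paper's approach: it establishes a uniform $C^0$ bound on $||S^i||_i$ via a Bochner inequality (the paper computes $\Delta_i||S^i||_i^2 \ge -C||S^i||_i^2$ and runs Moser iteration with the uniform Sobolev constant from~(\ref{equation-Sobolev}); your subharmonic mean-value route through $\log||S^i||^2$ is the equivalent standard alternative), and then defers the Cauchy-estimate bootstrap, the removable-singularity extension across the orbifold points, and the cut-off plus H\"ormander $\bar\partial$-correction argument for part~(ii) to Tian's Lemmas~2.1 and~2.2 in \cite{Ti}. Your sketch is in fact more explicit than the paper's terse reference about those deferred steps, and it correctly identifies the two places where the present setting differs from Tian's -- the soliton term $\nabla f_{\infty}$ in the limit and the fact that $\ric(g(t_i))$ need not be positive, forcing a twist of the weight -- and why each is harmless given the uniform $C^1$ bound on the potential and on $f_{\infty}$.
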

  
Sections $S^i$ and $S_{\infty}$ are not  on the same K\"ahler manifolds. From the definition
of the convergence, for every compact set $K\subset M_{\infty}$ there are 
diffeomorphisms $\phi_i$ from compact subsets $K_i\subset M$onto $K$ so that
$(\phi_i^{-1})^*g(t_i) \to g_{\infty}$ and $\phi_i^*\circ J_i\circ (\phi^{-1}_i)^* \to J_{\infty}$,
as $i\to\infty$. The convergence of sections in the previous lemma
means that the sections $\phi_{i*}(S^i)$ converge to a section $S^{\infty}$ of
$K_{M_{\infty}}^{-m}$ in the $C^{\infty}$-topology.
  
\begin{proof}[Proof of Lemma \ref{lem-section-1}]
Let $S^i$ be as in the statement of the lemma. If $\Delta_i$ is the laplacian
and $||\cdot||_i$ the inner product with respect to metric $g(t_i)$, 
by a direct computation, we have
$$\Delta_i(||S^i||_i^2)(x) = ||D_iS^i||_i^2(x) - \ric(S^i,S^i)(x),$$
where $D_i$ is the covariant derivative with respect to $g(t_i)$. Since $\ric$
is uniformly bounded along the flow,
\begin{equation}
\label{eq-moser-S}
\Delta_i(||S^i||_i^2)(x) \ge -C||S^i||_i^2(x),
\end{equation}
Since we have $\int_M ||S^i||_i\, dV_{t_i} = 1$ and Sobolev inequality 
(\ref{equation-Sobolev}) which holds for all $t\ge 0$, 
with the uniform upper bound on the Sobolev constant, applying Moser's iteration
to (\ref{eq-moser-S}), there is a uniform constant $C = C(n)$ such that
\begin{equation}
\label{eq-S-bound}
\sup_M (||S^i||^2_i(x)) \le C, \qquad \mbox{for all} \,\, i .
\end{equation}
Once we have the estimate (\ref{eq-S-bound}) we can proceed as in the proof of 
Lemma $2.1$ in \cite{Ti}. We need to show that for
every integer $j > 0$, the $j$-th covariant derivatives of $\phi_{i*}S^i$ ($\phi_{i*}$ is a
diffeomorphism that comes from a definition of convergence) are bounded 
in every compact set $K\subset M_{\infty}\backslash\{p_1,\dots, p_l\}$. Depending
only on $K$ there is an $r > 0$ so that for every $x\in K$ the geodesic ball $B_r(x,g(t_i))$ is
uniformly biholomorphic to an open subset of $\mathbb{C}^k$. On each $B_r(x,g(t_i))$,
the section $S^i$ is represented by a holomorphic function. We can use well-known Cauchy
integral formula to get uniform bounds on $j$-th covariant derivatives of $S^i$.

Absolutely the same proof as that of Lemma $2.2$ works also in our case to prove that any 
section $S$ in $H^0(M_{\infty},K_{M_{\infty}}^{-m})$ is the limit of some sequence $\{S^i\}$
with $S^i$ in $H^0(M,K_M^{-m})$. This in particular implies that the dimension of
$H^0(M_{\infty},K_{M_{\infty}}^{-m})$ is the same as that of $H^0(M, K_M^{-m})$.
\end{proof}

\section{Smoothing property in complex dimensions $\ge 3$}
\label{section-smoothing}

Given a complex manifold $X$ with strongly pseudoconvex boundary Y, we define
$\mathcal{B}^{p,q}(Y)$ to be the space of smooth sections of $\Lambda^{p,q}(X)
\cap \Lambda^{p,q}(T_{\mathbb{R}}^*Y\otimes \mathbb{C})$. The $\bar{\partial}$-
operator of $X$ induces  $\bar{\partial}_b: \mathcal{B}^{p,q}(Y) \to \mathcal{B}^{p,q+1}(Y)$.
Let $\bar{\partial}^*_b$ be the adjoint operator of $\bar{\partial}_b$ on $Y$. 
Since $\bar{\partial}_b^2 = 0$, we have the boundary complex
$$0 \to \mathcal{B}^{p,0} \stackrel{\bar{\partial}_b}{\to} \mathcal{B}^{p,1} \to 
\dots\stackrel{\bar{\partial}_b}{\to} \mathcal{B}^{p,n-1} \to 0.$$
The cohomology of this boundary complex is called Kohn-Rossi 
cohomology and is denoted by $H^{p,q}(\mathcal{B})$. 

By Theorem \ref{theorem-convergence} we have that a sequence $(M,g(t_i))$ 
converges to a K\"ahler Ricci soliton in an orbifold sense. In other words, there are
points $p_{1i},\dots p_{Ni}$ in $(M,g(t_i))$ and $p{1_{\infty}},\dots, p_{N{\infty}}$
in $M_{\infty}$ so that: for every $r > 0$ there are diffeomorphisms $\phi_i$
from compact sets $M_{\infty}\backslash (B_r(p_{1\infty})\cup\dots \cup
B_r(p_{N\infty})\}$ into $M$, containing 
$M\backslash B_{2r}(p_{1i},g(t_i))\cup\dots\cup B_{2r}(p_{Ni},g(t_i))\}$ so that
$\phi_i^*g_i$ and $\phi_i^*\circ J_i\circ (\phi_i^{-1})^*$ converge to $g_{\infty}$
and $J_{\infty}$, respectively. We would like to understand the holomorphic
structure of $B_r(p_{il})$, $1\le l \le N$, for sufficiently small $r$ and big $i$.
The same problem arose in \cite{Ti} where the sequence of K\"ahler-Einstein
manifolds converges to a K\"ahler-Einstein orbifold with isolated singularities.
The main tool there was Kohn's estimate for $\Box_b$-operators that works only
when $k \ge 3$.  Let $S_{\infty r} := \partial B_r(p_{\infty})$ (where $p_{\infty}$
is one of the points $\{p_{1\infty},\dots, p_{N\infty}\}$) be the level surface
of the distance function $\rho_{\infty}(\cdot,p_{\infty})$. The Levi form on
$S_{r\infty}$ is
$$(L_1, L_2) := 2(\partial\bar{\partial}\rho_{\infty}(\cdot,p_{\infty}), L_1\wedge
\bar{L}_2).$$
It is positive definite for $r$ small since $\rho(\cdot, p_{\infty})$ is convex
near $p_{\infty}$. Let $S_{ir}$ be the level 
surface $\{x \in (M, g(t_i)) | \rho_{\infty}(p_{\infty}, \phi_i^{-1}(x)) = r\}$,
which is also a smooth, pseudoconvex manifold. Denote by $\tilde{S}_i$
the universal covering of $S_{ir}$, it is diffemorphic to $S^{n-1}$. By using
the result in (\cite{Y}) that $H^{0,1}(\mathcal{B}(S^{n-1}) = 0$ for $n \ge 6$
(or $k \ge 3$), Tian (\cite{Ti}) showed the following Kohn's estimate for
$\bar{\partial}_b$-operator,
$$C||u||_2^2 \le ||\bar{\partial}_bu||_2^2 + ||\bar{\partial}^*_b u||_2^2,$$
for any $u \in \mathcal{B}^{0,1}(\tilde{S}_i)$ and a uniform constant $C$.
If $\lambda$ is the smallest eigenvalue of $\Box_b = \bar{\partial}_b
\bar{\partial}_b^* + \bar{\partial}^*_b\bar{\partial}_b$, the following estimate 
is equivalent to $\lambda \ge c > 0$. Tian used this to show there exist
embeddings $k_{ij}: \tilde{S}_j \to \mathbb{C}^k$ so that $k_{ij}(\tilde{H}_j)$
converge to $S^{n-1}$ as submanifolds of $\mathbb{C}^k$, in sufficiently
nice topology. Following the same proof as that of Tian in \cite{Ti}
we get the following proposition.

\begin{proposition}
\label{prop-smooth-limit}
Let $(M,g(t))$ be the K\"ahler Ricci flow and $t_i\to\infty$ as at the beginning
of section \ref{section-tian}. Then either a sequence $\{(M, g(t_i))\}$ converges 
to a K\"ahler-Ricci soliton in $C^p$-topology, or there is a smooth K\"ahler-Ricci
soliton $(M_{\infty},g_{\infty})$ so that a sequence $\{(M,g(t_i))\}$ converges
to $(M_{\infty},g_{\infty})$ in the $C^p$-topology, outside finitely many points.  
\end{proposition}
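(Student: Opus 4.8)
**The plan is to adapt Tian's argument from \cite{Ti} almost verbatim, the main new ingredient being that we work along the K\"ahler-Ricci flow rather than with a fixed sequence of K\"ahler-Einstein metrics.** By Theorem \ref{theorem-convergence} we already know that a subsequence of $(M,g(t_i))$ converges to a K\"ahler-Ricci soliton orbifold $(M_\infty,g_\infty)$ with finitely many isolated singular points $p_{1\infty},\dots,p_{N\infty}$, and the convergence is smooth away from these points. If $N=0$ we are in the first alternative of the proposition and there is nothing to prove, so assume $N\ge 1$ and fix one singular point $p_\infty=p_{j\infty}$. The goal is to show that a small geodesic ball $B_r(p_{ji},g(t_i))$ carries a holomorphic structure that converges, together with the metric, across the would-be singularity, so that $M_\infty$ is in fact smooth there.

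\textbf{First I would set up the boundary CR-geometry.} For $r$ small the distance function $\rho_\infty(\cdot,p_\infty)$ is strictly plurisubharmonic near $p_\infty$, so the level set $S_{\infty r}=\partial B_r(p_\infty)$ is a strictly pseudoconvex CR-manifold, and by the orbifold structure from Theorem \ref{theorem-convergence} its universal cover is diffeomorphic to the standard sphere $S^{n-1}$. Transporting via the diffeomorphisms $\phi_i$ furnished by the convergence, the hypersurfaces $S_{ir}=\{x\in M\mid \rho_\infty(p_\infty,\phi_i^{-1}(x))=r\}$ are smooth strictly pseudoconvex (for $i$ large), and $\tilde S_{ir}\cong S^{n-1}$. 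Here I use $k\ge 3$: by the result of \cite{Y}, $H^{0,1}(\mathcal B(S^{n-1}))=0$ for $n\ge 6$, which via Kohn's estimate on strictly pseudoconvex CR-manifolds gives a uniform spectral gap $\lambda\ge c>0$ for $\Box_b$ on $\tilde S_{ir}$, equivalently $C\|u\|_2^2\le \|\bar\partial_b u\|_2^2+\|\bar\partial_b^* u\|_2^2$ for $u\in\mathcal B^{0,1}(\tilde S_{ir})$, with $C$ independent of $i$. The uniformity comes from the $C^\infty$-convergence of the induced CR-structures on $S_{ir}$ to that of $S_{\infty r}$.

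\textbf{Next I would run Tian's embedding argument.} Using the spectral gap, one solves a $\bar\partial_b$-problem on each $\tilde S_{ir}$ to produce CR-functions, and packages them into embeddings $k_i:\tilde S_{ir}\to\mathbb C^k$ whose images converge, as submanifolds of $\mathbb C^k$, to the standard sphere $S^{n-1}\subset\mathbb C^k$ in a sufficiently strong topology. One then fills in: the region bounded by $k_i(\tilde S_{ir})$ in $\mathbb C^k$ provides a holomorphic chart, and gluing it to $\phi_i(M_\infty\setminus\bigcup_l B_r(p_{l\infty}))$ along $S_{ir}$ exhibits $M$ (with its complex structure $J_i$, up to the finite group action at the singular point, which must actually be trivial by the embedding) as a smooth K\"ahler manifold whose metric $g(t_i)$ is uniformly controlled near $p_{ji}$. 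Carrying this out for all $N$ points and passing to a further subsequence yields a smooth limit $(M_\infty,g_\infty)$ with $(M,g(t_i))\to(M_\infty,g_\infty)$ in $C^p$ outside finitely many points — in fact, combined with the interior smooth convergence, everywhere; this is the second alternative. The soliton equation \eqref{equation-soliton-limit} passes to the limit since the convergence is now smooth across the former singular points.

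\textbf{The hard part will be the uniformity of Kohn's estimate and the convergence of the CR-embeddings along a sequence of genuinely different complex structures.} In \cite{Ti} the ambient manifolds are K\"ahler-Einstein, which gives very strong a priori control; here we have only the Ricci bound, the $L^{n/2}$ curvature bound, and the $\epsilon$-regularity of Proposition \ref{lemma-lemma_regularity}. What rescues the argument is precisely that, by Theorem \ref{theorem-convergence}, the $J_i$ (transported by $\phi_i$) converge smoothly to $J_\infty$ on compact subsets of the regular part, so on the fixed annular region where $S_{ir}$ lives the CR-structures are as close as we like to a fixed one; the constants in Kohn's estimate, the $\bar\partial_b$-solution operator, and the embedding all depend continuously on the CR-structure in the relevant topology, so uniformity in $i$ follows. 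One must also check that the fixed geodesic radius $r$ can be chosen once and for all (depending only on the soliton $g_\infty$ near its singular points), which is where the quadratic curvature decay from Proposition \ref{lemma-lemma_regularity} and the orbifold structure are used. Modulo these points, the proof is that of \cite{Ti}, and I would simply cite it for the analytic core after having verified the uniform hypotheses.
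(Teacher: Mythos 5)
Your proposal follows the same route the paper does — namely, cite Tian's Section~3 and run his boundary--CR argument using the vanishing $H^{0,1}(\mathcal{B}(S^{n-1}))=0$ for $k\ge 3$ to get a uniform Kohn spectral gap, CR-embed the level sets $\tilde S_{ir}$ into $\mathbb{C}^k$, and fill in. The paper itself gives essentially no independent proof; it just describes the inputs and defers to \cite{Ti}.

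However, you omit the one ingredient the paper singles out as the essential input from this paper's own material: Lemma~\ref{lem-section-1} of Section~\ref{section-tian}, which produces orthonormal bases $\{S^i_j\}_{j=0}^{N_m}$ of $H^0(M,K_M^{-m})$ (with respect to $g(t_i)$) converging to a basis of $H^0(M_\infty,K_{M_\infty}^{-m})$, and hence gives Kodaira embeddings of the $(M,g(t_i))$ and of $M_\infty$ into $\mathbb{CP}^{N_m}$. Your filling-in step asserts that the finite uniformizing group at each orbifold point ``must actually be trivial by the embedding,'' but the CR embedding of the universal cover $\tilde S_{ir}$ of the boundary sphere by itself only identifies the local model as a quotient $B/\Gamma$; it does not force $\Gamma$ to be trivial. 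In Tian's argument the global plurianticanonical embedding is exactly what lets one match the local CR picture against a smooth projective-algebraic structure and conclude the smoothness of $M_\infty$. Without the $L^\infty$ bound on sections (\ref{eq-S-bound}) and the convergence of plurianticanonical divisors, the ``filling in'' step does not close. So your outline captures the CR half of Tian's Section~3 but leaves a genuine gap where the Kodaira embedding should be invoked; the paper's one-paragraph proof makes that dependence explicit.
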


The proof of the above proposition uses results from section \ref{section-tian},
more precisely Lemma \ref{lem-section-1},
to get the orthonormal bases $\{S_j^i\}_{j=0}^{N_m}$ of $H^0(M,K_M^{-m})$,
with respect to metric $g(t_i)$, that converges to the basis 
$\{S_0^{\infty},\dots,S_{N_m}^{\infty}\}$ of $H^0(M_{\infty}, g_{\infty})$
(defining the Kodaira's embedding of $M_{\infty}$ into $\mathbb{CP}^{N_m}$).
In particular, for $j$ sufficiently large, these $\{S_j^i\}$ give embeddings of
$M$ into $\mathbb{CP}^{N_m}$. The proof of Proposition \ref{prop-smooth-limit}
can be found in section $3$ of \cite{Ti}.

\begin{proof}[Proof of Theorem \ref{thm-curv-bound}]
The proof is by contradiction. Assume there is a sequence $t_i\to\infty$
so that $\max_M|\rem|(\cdot,t_i) \to \infty$ as $i\to\infty$. 
Take $\eta > 0$ small. Since $|\ric| \le C$ along the flow, 
for every $i$, the metrics $g(t_i+t)$ are uniformly equivalent for $t\in [-\eta,\eta]$,
with constants independent of $i$, that is,
$$cg(t_i+t) \le g(t_i) \le Cg(t_i+t), \qquad \mbox{for all} \,\, i \,\, \mbox{and all}
\,\, t\in [-\eta,\eta].$$  
This means that by Proposition \ref{prop-smooth-limit}, either a sequence
of solutions $\{(M,g(t_i+t))\}$ converges to a K\"ahler-Ricci soliton solution in
$C^p$-topology, or there is a smooth K\"ahler-Ricci soliton 
$(M_{\infty},g_{\infty}(t))$ so that a sequence $\{(M,g(t_i+t))\}$ converges 
to $(M_{\infty},g_{\infty}(t))$ in the $C^p$-topology, outside finitely many
points, for every $t\in [-\eta,\eta]$. This yields each $B_r(p_{l\infty},g_{\infty}(t))$,
for $t\in [-\eta,\eta]$,  with small $r$, is a smooth ball in $\mathbb{C}^k$. 
Therefore, for $i$ large enough and $t\in [-\eta,\eta]$,
$B_r(p_{li},g(t_i+t))$ are smooth balls in $\mathbb{C}^k$ as well.

To finish the proof of Theorem \ref{thm-curv-bound} we will need one of
the achievements of Perelman, known as the {\em pseudolocality theorem}
for the Ricci flow (\cite{perelman2002}). It says that 
for every $\alpha > 0$ there exists $\delta > 0$, $\epsilon > 0$ with
the following property. Suppose we have a smooth solution to the Ricci
flow $(g_{ij})_t = -2R_{ij}$ for $0 \le t \le (\epsilon r_0)^2$ and
assume that at $t = 0$ we have $R(x) \ge -r_0^{-2}$ and $\vol(\partial
\Omega)^n \ge (1 - \delta)c_n\vol(\Omega)^{n-1}$ for any $x$ and
$\Omega \subset B(x_0,r_0)$, where $c_n$ is the euclidean isoperimetric
constant. Then we have an estimate $|\rem|(x,t) \le \alpha t^{-1} +
(\epsilon r_0)^{-2}$, whenever $0 < t \le (\epsilon r_0)^2$, $d(x,t) =
\dist_t (x,x_0) \le \epsilon r_0$.

We will  apply the pseudolocality theorem to each of the 
solutions $\tilde{g}_i(t) = g(t_i - \eta + t)$, for $i\ge i_0$. Fix $\alpha > 0$ and choose 
$\epsilon, \delta > 0$ as in Perelman's result. Since for
$r_0 = r$ the conditions of the pseudolocality theorem are
satisfied,  $|\rem(\tilde{g}_i)|(x,t) \le \alpha t^{-1} + (\epsilon r)^{-2}$,
whenever $0 < t < (\epsilon r)^2$ and $\dist_{\tilde{g}_i(t)}(x,p_{li}) 
\le \epsilon r$. If $(\epsilon r)^2 < \eta$, apply Perelman's result to
$\tilde{g}_i(\cdot,t)$ starting at $t = (\epsilon r)^2/2$, 
since it also satisfies the assumptions of his theorem. 
This will help us extend our uniform in $i$ estimates on $|\rem|(\tilde{g}_i(t))$
past time $(\epsilon r)^2$. Repeat this procedure until we hit
$t = \eta$. To summarize, we get that
$$\sup_M |\rem|(\cdot,t_i) \le C(n,\epsilon,\alpha,\eta,r), \qquad \mbox{for all} \,\,\,
i \ge i_0,$$
and we get a contradiction with $\max_M|\rem|(\cdot,t_i) \stackrel{i\to\infty}{\to} 
\infty$.
\end{proof}

\end{document}